\documentclass[11pt,a4paper]{article}

\title{On GMRES for singular EP and GP systems}
\author{Keiichi Morikuni\thanks{Division of Information Engineering, Faculty of Engineering, Information and Systems, University of Tsukuba, Japan. Email: morikuni@cs.tsukuba.ac.jp, URL: http://researchmap.jp/KeiichiMorikuni/. The work was supported in part by JSPS KAKENHI Grant Number 16K17639.} \and Miroslav Rozlo\v{z}n\'{i}k\thanks{Institute of Mathematics, Czech Academy of Sciences, Prague, Czech Republic. Email: miro@math.cas.cz. The work was supported by the project GA17-12925S of the Czech Science Foundation.}}
\date{}

\usepackage{amsmath, amsfonts, amssymb, amsthm}
\usepackage[all, warning]{onlyamsmath} 
\usepackage{graphicx}
\usepackage[
	bookmarks=false,
	bookmarksnumbered=true,%
	bookmarksopen=false,	
	bookmarkstype=toc,%
	colorlinks={true},%
	pdfauthor={Keiichi Morikuni and Miroslav Rozloznik},%
	pdfdisplaydoctitle={true},
	pdftitle={On GMRES for singular EP and GP systems},%
	setpagesize={false},%
	urlcolor={red},
	pdfstartview={FitH -32768}
]{hyperref}

\hypersetup{pdfpagemode=UseNone} 

\usepackage{cleveref}
\crefformat{equation}{\textup{#2(#1)#3}}
\crefname{theorem}{Theorem}{Theorems}
\Crefname{figure}{Figure}{Figures}
\crefname{figure}{Figure}{Figures}
\crefname{proposition}{Proposition}{Propositions}

\usepackage{enumitem}
\usepackage[T1]{fontenc}
\usepackage{exscale, fix-cm, lmodern, textcomp}
\usepackage[top=1in, bottom=1in, left=1in, right=1in]{geometry}
\usepackage{algorithm, algorithmic}
\usepackage{subfig}

\hypersetup{pdfpagemode=UseNone}

\theoremstyle{plain}
\newtheorem{theorem}{Theorem}[section]

\newtheorem{proposition}[theorem]{Proposition}

\numberwithin{equation}{section}
\makeatletter
	
	\@addtoreset{equation}{section}
\makeatother

\makeatletter
  
  \@addtoreset{figure}{section}
\makeatother

\makeatletter
  
  \@addtoreset{algorithm}{section}
\makeatother

\makeatletter
  
  \@addtoreset{table}{section}
\makeatother

\setlength\belowcaptionskip{12pt}

\setlength\textfloatsep{0pt}


\newfont{\bg}{cmr9 scaled\magstep4}
\newcommand{\bigzerol}{\smash{\lower1.0ex\hbox{\bg 0}}}

\DeclareMathOperator{\rank}{rank}
\DeclareMathOperator{\ind}{index}
\DeclareMathOperator{\diag}{diag}

\DeclareMathOperator*{\argmin}{argmin}

\begin{document}
\maketitle

\begin{abstract}
In this contribution, we study the numerical behavior of the Generalized Minimal Residual (GMRES) method for solving singular linear systems. 
It is known that GMRES determines a least squares solution without breakdown if the coefficient matrix is range-symmetric (EP), or if its range and nullspace are disjoint (GP) and the system is consistent.
We show that the accuracy of GMRES iterates may deteriorate in practice due to three distinct factors: (i)~the inconsistency of the linear system; (ii)~the distance of the initial residual to the nullspace of the coefficient matrix; (iii)~the extremal principal angles between the ranges of the coefficient matrix and its transpose.
These factors lead to poor conditioning of the extended Hessenberg matrix in the Arnoldi decomposition and affect the accuracy of the computed least squares solution.
We also compare GMRES with the range restricted GMRES (RR-GMRES) method. 
Numerical experiments show typical behaviors of GMRES for small problems with EP and GP matrices.
\end{abstract}

\section{Introduction} \label{sec:intro}
Consider solving linear systems of equations
\begin{align}
A \boldsymbol{x} = \boldsymbol{b},
\label{eq:Ax=b}
\end{align}
where $A \in \mathbb{R}^{n \times n}$ may be singular and $\boldsymbol{b} \in \mathbb{R}^n$ is not necessarily in $\mathcal{R}(A) = \lbrace \boldsymbol{y} \in \mathbb{R}^n \mid \boldsymbol{y} = A \boldsymbol{x},  \ \boldsymbol{x} \in \mathbb{R}^n\rbrace$, the range of $A$.
We say that $A \boldsymbol{x} = \boldsymbol{b}$ is consistent if $\boldsymbol{b} \in \mathcal{R}(A)$, and otherwise it is inconsistent.
If \cref{eq:Ax=b} is inconsistent, instead of \cref{eq:Ax=b}, it is natural to consider solving the least squares problem
\begin{align}
\| \boldsymbol{b} - A \boldsymbol{x} \| = \min_{\boldsymbol{u} \in \mathbb{R}^n} \| \boldsymbol{b} - A \boldsymbol{u} \|,
\label{eq:LPprob}
\end{align}
where $\| \cdot \|$ denotes the Euclidean norm.
We call a minimizer $\boldsymbol{x} \in \argmin_{\boldsymbol{u} \in \mathbb{R}^n} \| \boldsymbol{b} - A \boldsymbol{u} \|$ a least squares solution, which is not necessarily unique.

In order to analyze iterative methods for solving \cref{eq:Ax=b} in terms of the spaces associated with $A$, we give some required definitions and notations.
Let $\mathcal{N}(A) = \lbrace \boldsymbol{x} \in \mathbb{R}^n \mid A \boldsymbol{x} = \boldsymbol{0} \rbrace$ be the nullspace of $A$.
Then, we have $\mathcal{N}(A^\mathsf{T}) \oplus \mathcal{R}(A) = \mathcal{N}(A) \oplus \mathcal{R}(A^\mathsf{T}) = \mathbb{R}^n$, $\dim \mathcal{N}(A^\mathsf{T}) = \dim \mathcal{N}(A)$, and $\dim \mathcal{R}(A^\mathsf{T}) = \dim \mathcal{R}(A) = \rank (A)$, where $\oplus$ denotes the direct sum of subspaces.
Let $r = \rank (A)$ and denote the singular value decomposition (SVD) of $A$ by $U \Sigma V^\mathsf{T}$, where $U \in\mathbb{R}^{n \times n}$ and $V \in \mathbb{R}^{n \times n}$ are orthogonal matrices $U^\mathsf{T} \! U = U U^\mathsf{T} = V^\mathsf{T} V = V V^\mathsf{T} = \mathrm{I}$, $\mathrm{I}$ is the identity matrix, $\Sigma = \diag (\sigma_1, \sigma_2, \dots, \sigma_r, 0, 0, \dots, 0) \in \mathbb{R}^{n \times n}$, and $\sigma_i$ is the $i$th largest nonzero singular value of $A$.
Let $U = [U_1, U_2]$ and $V = [V_1, V_2]$, where the columns of $U_1 \in \mathbb{R}^{n \times r}$ and $U_2 \in \mathbb{R}^{n \times (n-r)}$ form orthonormal bases of $\mathcal{R}(A) = \mathcal{R}(U_1)$ and $\mathcal{R}(A)^\perp = \mathcal{N}(A^\mathsf{T}) = \mathcal{R}(U_2)$, respectively, and the columns of $V_1 \in \mathbb{R}^{n \times r}$ and $V_2 \in \mathbb{R}^{n \times (n-r)}$ form orthonormal bases of $\mathcal{N}(A)^\perp = \mathcal{R}(A^\mathsf{T}) = \mathcal{R}(V_1)$ and $\mathcal{N}(A) = \mathcal{R}(V_2)$, respectively.

We recall the definitions of generalized inverses.
We call a matrix $X \in \mathbb{R}^{n \times n}$ the Moore-Penrose generalized inverse (pseudoinverse) of $A \in \mathbb{R}^{n \times n}$ if $X$ satisfies the Penrose equations $A X A = A$, $X A X = X$, $(A X)^\mathsf{T} = A X$, and $(X A)^\mathsf{T} = XA$, denote it by $A^\dag$, and have the identity $A^\dag = V_1 \Sigma_r^{-1} {U_1}^\mathsf{T}$, where $\Sigma_r = \diag(\sigma_1, \sigma_2, \dots, \sigma_r)$.
The condition number of $A$ is denoted by $\kappa(A) = \| A \| \|A^\dag \|$ \cite[Definition 1.4.2]{Bjorck1996}.
The smallest nonnegative integer $k$ such that $\rank (A^k) = \rank (A^{k + 1})$ is called the index of $A$ \cite[Definition 7.2.1]{CampbellMeyer2009}, and is denoted by $\ind (A)$.
In addition, $k \geq \mathrm{index}(A) \iff \mathcal{N}(A^k) \oplus \mathcal{R}(A^k) = \mathbb{R}^n$ \cite[p.~121]{CampbellMeyer2009}.
Let $\ind(A) = 1$ and $X \in \mathbb{R}^{n \times n}$ be such that $A X A = A$, $X A X = X$, and $A X = X A$.
Then, $X$ is unique, and we call $X$ the group inverse of $A$ and denote it by $A^\#$.
The group inverse of the matrix $A$ is on $\mathcal{R}(A)$ equal to the inverse of the restriction of $A$ to its range ${\cal R}(A)$ and admits $\mathcal{N}(A)$ as its nullspace \cite[Theorem 2]{Robert1968}.
The group inverse can be characterized by the Jordan canonical form.
Let $S$ be a nonsingular matrix such that $J = S^{-1} \! A S$ is the Jordan canonical form of $A$.
Then, $A^\# = S J^\dag S^{-1}$ holds.
In particular, we have $\mathcal{R}(A^\#) = \mathcal{R}(A)$, $\mathcal{N}(A^\#) = \mathcal{N}(A)$, and $A^\# \! A = A A^\# = P_{\mathcal{R}(A), \mathcal{N}(A)}$ \cite{Ben-IsraelGreville2003}.
Here, $P_{\mathcal{R}(A), \mathcal{N}(A)}$ denotes the projection onto $\mathcal{R}(A)$ along $\mathcal{N}(A)$.

Now, we express solutions of \cref{eq:Ax=b,eq:LPprob}.
The vector $\boldsymbol{x}_* = A^\dag \boldsymbol{b} = V_1 \Sigma_r^{-1} U_1^\mathsf{T} \boldsymbol{b}$ is called the minimum-norm least squares or pseudoinverse solution of \cref{eq:Ax=b} or \cref{eq:LPprob}, and it belongs to $\mathcal{R}(A^\mathsf{T})$.
We next give the expressions of the residual of \cref{eq:LPprob}.
Denote the orthogonal projector onto $\mathcal{R}(A)$ by $P_{\mathcal{R}(A)} = U_1 U_1^\mathsf{T}$ and that onto $\mathcal{N}(A^\mathsf{T})$ by $P_{\mathcal{N}(A^\mathsf{T})} = U_2 U_2^\mathsf{T}$.
If $\boldsymbol{v} \rvert_\mathcal{S} \in \mathbb{R}^n$ is the orthogonal projection of a vector $\boldsymbol{v} \in \mathbb{R}^n$ onto the subspace $\mathcal{S} \subseteq \mathbb{R}^n$, then for any $\boldsymbol{x}_0 \in \mathbb{R}^n$ and any $\boldsymbol{b} \in \mathbb{R}^n$, the corresponding residual of \cref{eq:LPprob} is $\boldsymbol{r}_0 = \boldsymbol{b} - A \boldsymbol{x}_0 = \boldsymbol{b} \rvert_{\mathcal{N}(A^\mathsf{T})} + \boldsymbol{b} \rvert_{\mathcal{R}(A)} - A \boldsymbol{x}_0 = \boldsymbol{r}_* + \boldsymbol{r}_0 \rvert_{\mathcal{R}(A)}$, where $\boldsymbol{r}_* = \boldsymbol{b} \rvert_{\mathcal{N}(A^\mathsf{T})} = P_{\mathcal{N}(A^\mathsf{T})} \boldsymbol{b} \in \mathcal{N}(A^\mathsf{T})$ is the least squares residual and $\boldsymbol{b} \rvert_{\mathcal{R}(A)} = P_{\mathcal{R}(A)} \boldsymbol{b}$.
In particular, we have $\boldsymbol{r}_* = \boldsymbol{0}$ for $\boldsymbol{b} \in \mathcal{R}(A)$.
Assume $\ind (A)=1$.
It holds that $\boldsymbol{x}_\# = A^\# \boldsymbol{b}$ is a solution of $A \boldsymbol{x} = \boldsymbol{b}$ $\Longleftrightarrow$ $\boldsymbol{b} \in \mathcal{R}(A)$ \cite{CampbellMeyer2009}, and it is the unique solution of $A \boldsymbol{x} = \boldsymbol{b}$ in $\mathcal{R}(A)$.
Furthermore, the minimum Euclidean norm solution of \cref{eq:LPprob} satisfies $\boldsymbol{x}_* = P_{\mathcal{R(A^\mathsf{T})}} \boldsymbol{x}_\#$.

In this paper, we are interested in the numerical behavior of the Generalized Minimal Residual (GMRES) method \cite{SaadSchultz1986} applied in particular to singular systems \cref{eq:Ax=b}.
In \cref{sec:GMRsing}, we give some well-known conditions under which GMRES determines a solution without breakdown when applied to certain classes of singular matrices.
We discuss also a relation to the range-restricted GMRES (RR-GMRES) method proposed in \cite{CalvettiLewisReichel2000}.
In \cref{sec:GMR_EP}, we examine the conditioning of the coefficient matrix $A$ restricted to the Krylov subspaces that significantly influences the numerical behavior of GMRES.
We consider first the case of EP (equal projection) or range-symmetric matrices and distinguish between the consistent and inconsistent cases showing that the consistent case is similar to the nonsingular case.
Then we discuss the inconsistent EP case where GMRES suffers from an instability, since the convergence means ill-conditioned restriction of $A$ to the Krylov subspaces.
In \cref{sec:GMR_GP} we study the case of group (GP) matrices.
We show that the numerical behavior of GMRES applied to such problems depends substantially on the principal angles between the subspaces $\mathcal{R}(A)$ and $\mathcal{R}(A^\mathsf{T})$.
Surprisingly, difficulties can be expected for non-range-symmetric problems even for consistent systems.
In \cref{sec:conc}, we conclude the paper.

\section{GMRES methods and its convergence for singular systems} \label{sec:GMRsing}
GMRES for the linear system \cref{eq:Ax=b} with initial iterate $\boldsymbol{x}_0 \in \mathbb{R}^n$, independent of any particular implementation of the algorithm, determines the $k$th iterate $\boldsymbol{x}_k$ over $\boldsymbol{x}_0 + \mathcal{K}_k (A, \boldsymbol{r}_0)$ that minimizes $\| \boldsymbol{b} - A \boldsymbol{x}_k \|$, where $\boldsymbol{r}_0 = \boldsymbol{b} - A \boldsymbol{x}_0$ is the initial residual and $\mathcal{K}_k(A, \boldsymbol{r}_0) = \mathrm{span} \lbrace \boldsymbol{r}_0, A \boldsymbol{r}_0, \dots, A^{k-1} \boldsymbol{r}_0 \rbrace$ is the Krylov subspace of order $k$.
Note that there exist $\boldsymbol{x}_k$ and hence $\boldsymbol{r}_k = \boldsymbol{b} - A \boldsymbol{x}_k$ for all $k \geq 0$ but $\boldsymbol{x}_k$ may not be unique in the singular case.
Denote $\mathcal{K}_k(A, \boldsymbol{r}_0)$ by $\mathcal{K}_k$ for simplicity.
It is clear that $\mathcal{K}_k = \mathrm{span} \lbrace \boldsymbol{r}_0 \rbrace + A \mathcal{K}_{k-1} \subset \mathrm{span} \lbrace \boldsymbol{r}_* \rbrace + \mathcal{R}(A)$ holds.
If $\dim A \mathcal{K}_k = \dim \mathcal{K}_k$, then the problem
\begin{align}
\| \boldsymbol{b} - A \boldsymbol{x}_k \| = \min_{\boldsymbol{z} \in \mathcal{K}_k} \| \boldsymbol{b} - A (\boldsymbol{x}_0 + \boldsymbol{z}) \| = \min_{\boldsymbol{z} \in \mathcal{K}_k} \| \boldsymbol{r}_0 - A \boldsymbol{z} \| = \| \boldsymbol{r}_0 - A \boldsymbol{z}_k \|
\label{eq:prjLS}
\end{align}
has a unique solution $\boldsymbol{x}_k = \boldsymbol{x}_0 + \boldsymbol{z}_k$ and hence $\boldsymbol{r}_k = \boldsymbol{b} - A \boldsymbol{x}_k \in \boldsymbol{r}_0 + A \mathcal{K}_k$ is uniquely determined.

General studies on Krylov subspace methods in the singular case were done in \cite{IpsenMeyer1998}, \cite{Sidi1999}, \cite{WeiWu2000}, \cite{Schneider2005th}.
Particular studies on GMRES-type methods in the singular case were done in \cite{CalvettiLewisReichel2000}, \cite{Sidi2001}, \cite{ReichelYe2005}, \cite{Smoch2007}, \cite{DuSzyld2008}.
See \cite{CalvettiLewisReichel2002BIT}, \cite{CalvettiLewisReichel2002NumerMath}, \cite{EldenSimoncini2012} for GMRES on ill-posed linear systems, and \cite{Zhang2010} for GMRES with preconditioning.
See also \cite{GasparoPapiniPasquali2008} for GMRES and \cite{AwonoTagoudjeu2010} for GMRES with preconditioning in Hilbert spaces.

The GMRES method for solving ill-posed  problems was studied in \cite{CalvettiLewisReichel2002BIT}, \cite{CalvettiLewisReichel2002NumerMath}, \cite{EldenSimoncini2012}.
The preconditioned GMRES method applied to singular systems was considered in \cite{Zhang2010} .
See also \cite{GasparoPapiniPasquali2008} for GMRES and \cite{AwonoTagoudjeu2010} for the preconditioned GMRES method applied in Hilbert spaces.

In the nonsingular case, GMRES determines the solution of $A \boldsymbol{x} = \boldsymbol{b}$ for all $\boldsymbol{b} \in \mathbb{R}^n$ and for all $\boldsymbol{x}_0 \in \mathbb{R}^n$ within $n$ iterations.
In the singular case, GMRES may fail to determine a solution of \cref{eq:Ax=b}, and is said to break down at some step $k$ if $\dim A \mathcal{K}_k < \dim \mathcal{K}_k ~ \mbox{or} ~ \dim \mathcal{K}_k < k$ \cite[p.~38]{BrownWalker1997}.
Note that, in general, $\dim A \mathcal{K}_k \leq \dim \mathcal{K}_k \leq k$ holds for each $k$.

We give an explicit expression of the iterate $\boldsymbol{x}_k$ for GMRES using the Arnoldi decomposition $A Q_k = Q_{k+1} H_{k+1, k}$, $k = 1, 2, \dots$, where the columns of $Q_k = \left[ \boldsymbol{q}_1, \boldsymbol{q}_2, \dots, \boldsymbol{q}_k \right]$ form an orthonormal basis of the Krylov subspace $\mathcal{K}_k$, and $H_{k+1, k} = (h_{i,j}) \in \mathbb{R}^{(k+1) \times k}$ is an extended Hessenberg matrix.
Then the iterate is given by $\boldsymbol{x}_k = \boldsymbol{x}_0 + Q_k \boldsymbol{y}_k$ with $\boldsymbol{y}_k = \mathrm{arg\,min}_{\boldsymbol{y} \in \mathbb{R}^k} \| \beta \boldsymbol{e}_1 - H_{k+1, k} \boldsymbol{y} \|$, where $\beta = \| \boldsymbol{r}_0 \|$, $\boldsymbol{e}_1$ is the first column of the identity matrix and $\| \boldsymbol{b} - A \boldsymbol{x}_k \| = \| \boldsymbol{r}_0 - A Q_k \boldsymbol{y}_k \| = \| \beta \boldsymbol{e}_1 - H_{k+1,k} \boldsymbol{y}_k \|$.

It is clear that if $h_{i+1,i} \ne 0$ for $i=1, 2, \dots, k-1$, the breakdown does not occur until step $k-1$ of GMRES with $\dim A \mathcal{K}_i = i$, or $\rank(H_{i+1, i}) = i$, $i = 1, 2, \dots, k-1$.
At breakdown of GMRES at step $k$ with $h_{k+1,k}=0$, one of the following cases holds \cite[Appendix B]{MorikuniHayami2015} (cf.\ \cite[Theorem~2.2]{BrownWalker1997}):
\begin{description}[style=unboxed,leftmargin=0cm]
	\item[Case I.] $\dim A \mathcal{K}_{k+1} = k < \dim \mathcal{K}_{k+1} = k + 1$, whereas $\rank (H_{k,k}) = k-1$.
	\item[Case II.] $\dim A \mathcal{K}_k = k = \dim \mathcal{K}_{k+1} < k+1$, whereas $\rank(H_{k, k}) = k$ (GMRES determines a solution of $A \boldsymbol{x} = \boldsymbol{b}$ at step $k$).
\end{description}
Here, $H_{k.k} = (h_{i,j}) \in \mathbf{R}^{k \times k}$.

A variant of GMRES called the range restricted GMRES (RR-GMRES) method was proposed in \cite{CalvettiLewisReichel2000}.
RR-GMRES determines the $k$th iterate by minimizing the same objective function as GMRES over a different Krylov subspace
\begin{align*}
\| \boldsymbol{b} - A \boldsymbol{x}_k^\mathrm{R} \| & = \min_{\boldsymbol{z} \in \mathcal{K}_k (A, A \boldsymbol{r}_0)} \| \boldsymbol{b} - A (\boldsymbol{x}_0 + \boldsymbol{z}) \| = \min_{\boldsymbol{z} \in \mathcal{K}_k (A, A \boldsymbol{r}_0)} \| \boldsymbol{r}_0 - A \boldsymbol{z} \| = \| \boldsymbol{r}_0 - A \boldsymbol{z}_k^\mathrm{R} \|.
\end{align*}
It was shown in \cite[Theorem~A2]{CaoWang2002} that if RR-GMRES applied to \cref{eq:LPprob} breaks down at step $m$ with $\rank (A) = m-1$ and $\dim A \mathcal{K}_m (A, A\boldsymbol{r}_0) = m-1$, then it determines a solution of \cref{eq:LPprob}.
Here, RR-GMRES is said to break down if $\dim A \mathcal{K}_k (A, A \boldsymbol{r}_0) < \dim \mathcal{K}_k (A, A \boldsymbol{r}_0)$ or $\dim \mathcal{K}_k (A, A \boldsymbol{r}_0) < k$.

We give an explicit expression of the RR-GMRES iterate $\boldsymbol{x}_k$ using the Arnoldi decomposition $A Q_k^\mathrm{R} = Q_{k+1}^\mathrm{R} H_{k+1, k}^\mathrm{R}$, $k = 1, 2, \dots$, where the columns of $Q_k^\mathrm{R} = [ \boldsymbol{q}_1^\mathrm{R}, \boldsymbol{q}_2^\mathrm{R}, \dots, \boldsymbol{q}_k^\mathrm{R} ]$ form an orthonormal basis of the Krylov subspace $\mathcal{K}_k (A, A \boldsymbol{r}_0)$ with the initial vector $\boldsymbol{q}_1^\mathrm{R} = A \boldsymbol{r}_0 / \| A \boldsymbol{r}_0 \|$, and $H_{k+1, k}^\mathrm{R} = ( h_{i, j}^\mathrm{R} ) \in \mathbb{R}^{(k+1) \times k}$ is an extended Hessenberg matrix.
Then, the iterate is given by $\boldsymbol{x}_k^\mathrm{R} = \boldsymbol{x}_0 + Q_k^\mathrm{R} \boldsymbol{y}_k^\mathrm{R}$ with $\boldsymbol{y}_k^\mathrm{R} = \mathrm{arg\,min}_{\boldsymbol{y} \in \mathbb{R}^k} \| (Q_{k+1}^\mathrm{R})^\mathsf{T} \boldsymbol{r}_0 - H_{k+1, k}^\mathrm{R} \boldsymbol{y} \|$, where
\begin{align*}
\| \boldsymbol{b} - A \boldsymbol{x}_k^\mathrm{R} \|^2 & = \| \boldsymbol{r}_0 - A Q_k^\mathrm{R} \boldsymbol{y}_k^\mathrm{R} \|^2 \\
& = \| (Q_{k+1}^\mathrm{R})^\mathsf{T} \boldsymbol{r}_0 - H_{k+1,k}^\mathrm{R} \boldsymbol{y}_k^\mathrm{R} \|^2 + \| [\mathrm{I} - Q_{k+1}^\mathrm{R} (Q_{k+1}^\mathrm{R})^\mathsf{T}] \boldsymbol{r}_0 \|^2 \\
& = \min_{\boldsymbol{y} \in \mathbb{R}^k} \| (Q_{k+1}^\mathrm{R})^\mathsf{T} \boldsymbol{r}_0 - H_{k+1,k}^\mathrm{R} \boldsymbol{y} \|^2 + \| [\mathrm{I} - Q_{k+1}^\mathrm{R} (Q_{k+1}^\mathrm{R})^\mathsf{T}] \boldsymbol{r}_0 \|^2.
\end{align*}
The last term is equal to the $k$th residual norm for the simpler GMRES method \cite{WalkerZhou1994}, which is not larger than the $k$th residual norm for RR-GMRES, i.e., $\| \boldsymbol{b} - A \boldsymbol{x}_k \| \leq \| \boldsymbol{b} - A \boldsymbol{x}_k^\mathrm{R} \|$. Note also that $\| H_{k,k-1}^\mathrm{R} \| = \| A Q_{k-1}^\mathrm{R} \| \leq \| A Q_k \| = \| H_{k+1,k} \|$
and
\begin{align*}
\sigma_k (H_{k+1, k}) \leq \min_{\boldsymbol{y} \in \mathbb{R}^{k-1} \backslash \lbrace \boldsymbol{0} \rbrace} \frac{\| A Q_{k-1}^\mathrm{R} \boldsymbol{y} \|}{\| Q_{k-1}^\mathrm{R} \boldsymbol{y} \|} = \sigma_{k-1} (H_{k, k-1}^\mathrm{R})
\end{align*}
leading to an interesting bound $\kappa(H_{k, k-1}^\mathrm{R}) \leq \kappa(H_{k+1, k})$ for $k=2, \dots, n-1$, where $\sigma_k (\cdot)$ denotes the $k$th largest singular value of a matrix.

\bigskip

In the following, we present conditions under which GMRES determines a solution of \cref{eq:Ax=b}.
We start with the observation that in the case of $\mathcal{N}(A) \cap \mathcal{R}(A) \ne\lbrace \boldsymbol{0} \rbrace$, GMRES breaks down and fails to determine a solution.

\begin{proposition} \label{prop:bd}
	If $\boldsymbol{b} \in \mathcal{R}(A)$ and $\boldsymbol{0} \ne\boldsymbol{r}_0 \in \mathcal{N}(A) \cap \mathcal{R}(A)$, then GMRES breaks down at step 1 without determining a solution of $A \boldsymbol{x} = \boldsymbol{b}$.
\end{proposition}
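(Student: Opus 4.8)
The plan is to argue directly from the breakdown definition at step~1 together with the explicit form of the GMRES residual. First I would note that, since $\boldsymbol{r}_0 \ne \boldsymbol{0}$, the Krylov subspace $\mathcal{K}_1 = \spn\{\boldsymbol{r}_0\}$ satisfies $\dim \mathcal{K}_1 = 1$. Because $\boldsymbol{r}_0 \in \mathcal{N}(A)$ by hypothesis, we have $A\boldsymbol{r}_0 = \boldsymbol{0}$, so $A\mathcal{K}_1 = \spn\{A\boldsymbol{r}_0\} = \{\boldsymbol{0}\}$ and $\dim A\mathcal{K}_1 = 0$. Hence $\dim A\mathcal{K}_1 = 0 < 1 = \dim \mathcal{K}_1$, which is precisely the breakdown condition $\dim A\mathcal{K}_k < \dim \mathcal{K}_k$ specialized to $k=1$.

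Next I would show that this breakdown prevents GMRES from producing a solution. Every $\boldsymbol{z} \in \mathcal{K}_1$ has the form $\boldsymbol{z} = \alpha \boldsymbol{r}_0$, so $A\boldsymbol{z} = \alpha A\boldsymbol{r}_0 = \boldsymbol{0}$, and consequently $\| \boldsymbol{r}_0 - A\boldsymbol{z} \| = \| \boldsymbol{r}_0 \|$ for every admissible $\boldsymbol{z}$. The step-$1$ minimization in \cref{eq:prjLS} therefore attains its value with residual $\boldsymbol{r}_1 = \boldsymbol{r}_0$, which is nonzero. Equivalently, in the Arnoldi decomposition one sets $\boldsymbol{q}_1 = \boldsymbol{r}_0 / \| \boldsymbol{r}_0 \|$ and finds $A\boldsymbol{q}_1 = \boldsymbol{0}$, whence $h_{1,1} = h_{2,1} = 0$, so $H_{2,1} = \boldsymbol{0}$ and the least squares objective $\| \beta \boldsymbol{e}_1 - H_{2,1} \boldsymbol{y} \| = \beta = \| \boldsymbol{r}_0 \|$ for all $\boldsymbol{y} \in \mathbb{R}^1$. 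This is exactly the Case~I breakdown above, with $\rank(H_{1,1}) = 0 = k-1$.

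Finally I would invoke consistency. Since $\boldsymbol{b} \in \mathcal{R}(A)$, the system $A\boldsymbol{x} = \boldsymbol{b}$ has an exact solution, at which the residual vanishes; but the GMRES residual remains fixed at $\boldsymbol{r}_1 = \boldsymbol{r}_0 \ne \boldsymbol{0}$, so no step-$1$ iterate solves the system. Moreover, because $A\boldsymbol{r}_0 = \boldsymbol{0}$ forces $\mathcal{K}_2 = \mathcal{K}_1$, the search space cannot grow at any subsequent step, making the stagnation permanent. I do not anticipate a serious obstacle here: the only point requiring care is to verify that the residual genuinely cannot be reduced, rather than merely that the subspace dimension collapses, and the identity $\| \boldsymbol{r}_0 - A\boldsymbol{z} \| = \| \boldsymbol{r}_0 \|$ valid on all of $\mathcal{K}_1$ settles this at once.
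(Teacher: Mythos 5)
Your proof is correct and takes essentially the same route as the paper's: both argue that $\boldsymbol{r}_0 \ne \boldsymbol{0}$ gives $\dim \mathcal{K}_1 = 1$ while $A\boldsymbol{r}_0 = \boldsymbol{0}$ gives $\dim A\mathcal{K}_1 = 0$, so the breakdown condition $\dim A\mathcal{K}_1 < \dim \mathcal{K}_1$ holds at step~1. Your second and third paragraphs simply make explicit what the paper leaves implicit---that $\| \boldsymbol{r}_0 - A\boldsymbol{z} \| = \| \boldsymbol{r}_0 \| \ne 0$ for all $\boldsymbol{z} \in \mathcal{K}_1$ while consistency guarantees a zero-residual solution exists---which is a harmless and indeed welcome elaboration of the ``without determining a solution'' clause.
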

\begin{proof}
	Since $\boldsymbol{r}_0 \ne\boldsymbol{0}$, we have $\dim \mathcal{K}_1 = \dim \mathrm{span} \lbrace \boldsymbol{r}_0 \rbrace = 1$.
	Since $\boldsymbol{r}_0 \in \mathcal{N}(A)$ gives $A \boldsymbol{r}_0 = \boldsymbol{0}$, we have $\dim A \mathcal{K}_1 = \dim \mathrm{span} \lbrace A \boldsymbol{r}_0 \rbrace = \dim \mathrm{span} \lbrace \boldsymbol{0} \rbrace = 0$.
	Hence, $\dim A \mathcal{K}_1 < \dim \mathcal{K}_1$ holds.
	Therefore, GMRES breaks down at step 1 without determining a solution of $A \boldsymbol{x} = \boldsymbol{b}$.
\end{proof}
Similarly to GMRES, RR-GMRES also breaks down at step 1 without determining a solution of $A \boldsymbol{x} = \boldsymbol{b}$ if $\boldsymbol{b} \in \mathcal{R}(A)$ and $\boldsymbol{0} \ne\boldsymbol{r}_0 \in \mathcal{N}(A) \cap \mathcal{R}(A)$.
Therefore, we will restrict our attention to the cases of $\mathcal{N}(A) \cap \mathcal{R}(A) = \lbrace \boldsymbol{0} \rbrace$.
The following statement holds.

\begin{theorem}[{\cite[Theorem~2.6]{BrownWalker1997}, \cite[Theorem~3.2]{WeiWu2000}}] \label{th:GMRESconv_SS}
	If $\mathcal{N}(A) \cap \mathcal{R}(A) = \lbrace \boldsymbol{0} \rbrace$, then GMRES determines a solution of $A \boldsymbol{x} = \boldsymbol{b}$ without breakdown for all $\boldsymbol{b} \in \mathcal{R}(A)$ and for all $\boldsymbol{x}_0 \in \mathbb{R}^n$.
	The solution is $\boldsymbol{x}_\#  + (\mathrm{I} - A^\# \! A) \boldsymbol{x}_0$.
\end{theorem}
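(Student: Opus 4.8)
The plan is to exploit the fact that the hypothesis $\mathcal{N}(A) \cap \mathcal{R}(A) = \lbrace \boldsymbol{0} \rbrace$, combined with the dimension identity $\dim\mathcal{N}(A) + \dim\mathcal{R}(A) = n$, forces $\mathcal{N}(A) \oplus \mathcal{R}(A) = \mathbb{R}^n$, so that $\ind(A) = 1$ and the group inverse $A^\#$ exists. The central structural observation I would isolate is that $A$ maps $\mathcal{R}(A)$ into itself and is injective there: if $A\boldsymbol{z} = \boldsymbol{0}$ with $\boldsymbol{z} \in \mathcal{R}(A)$, then $\boldsymbol{z} \in \mathcal{N}(A) \cap \mathcal{R}(A) = \lbrace \boldsymbol{0} \rbrace$. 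Since $\mathcal{R}(A)$ is finite dimensional of dimension $r$, the restriction $A|_{\mathcal{R}(A)}$ is therefore a bijection of $\mathcal{R}(A)$ onto itself, and its inverse is exactly $A^\#$ restricted to $\mathcal{R}(A)$.

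First I would observe that $\boldsymbol{b} \in \mathcal{R}(A)$ and $A\boldsymbol{x}_0 \in \mathcal{R}(A)$ give $\boldsymbol{r}_0 = \boldsymbol{b} - A\boldsymbol{x}_0 \in \mathcal{R}(A)$, and since $\mathcal{R}(A)$ is $A$-invariant, every Krylov subspace satisfies $\mathcal{K}_k(A, \boldsymbol{r}_0) \subseteq \mathcal{R}(A)$. Consequently the GMRES process on $\mathbb{R}^n$ with these data coincides \emph{verbatim} with GMRES for the nonsingular operator $A|_{\mathcal{R}(A)}$ acting on the $r$-dimensional space $\mathcal{R}(A)$: the Arnoldi vectors, the Krylov subspaces, and the minimization $\min_{\boldsymbol{z} \in \mathcal{K}_k} \| \boldsymbol{r}_0 - A\boldsymbol{z} \|$ are identical in both interpretations.

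From the injectivity of $A$ on $\mathcal{R}(A)$ it then follows that $A$ restricted to any $\mathcal{K}_k \subseteq \mathcal{R}(A)$ is injective, so $\dim A\mathcal{K}_k = \dim \mathcal{K}_k$ for every $k$; this excludes Case~I breakdown outright, leaving only the happy termination of Case~II. Because all the $\mathcal{K}_k$ are contained in the $r$-dimensional space $\mathcal{R}(A)$, their dimensions must stabilize, and the standard finite-termination theory of GMRES for the nonsingular restriction guarantees that the minimal residual reaches zero in at most $r$ steps. Equivalently, GMRES determines an iterate $\boldsymbol{x}_k = \boldsymbol{x}_0 + \boldsymbol{z}_k$ with $\boldsymbol{z}_k \in \mathcal{R}(A)$ and $A\boldsymbol{x}_k = \boldsymbol{b}$.

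It remains to identify the limit. The solution $\boldsymbol{z} \in \mathcal{R}(A)$ of $A\boldsymbol{z} = \boldsymbol{r}_0$ is unique, since two such solutions differ by an element of $\mathcal{N}(A) \cap \mathcal{R}(A) = \lbrace \boldsymbol{0} \rbrace$; and because $A A^\# \boldsymbol{r}_0 = P_{\mathcal{R}(A), \mathcal{N}(A)} \boldsymbol{r}_0 = \boldsymbol{r}_0$ while $A^\# \boldsymbol{r}_0 \in \mathcal{R}(A^\#) = \mathcal{R}(A)$, this unique solution is $\boldsymbol{z} = A^\# \boldsymbol{r}_0$. Hence $\boldsymbol{x}_k = \boldsymbol{x}_0 + A^\# \boldsymbol{r}_0 = \boldsymbol{x}_0 + A^\#(\boldsymbol{b} - A\boldsymbol{x}_0) = A^\# \boldsymbol{b} + (\mathrm{I} - A^\# A)\boldsymbol{x}_0 = \boldsymbol{x}_\# + (\mathrm{I} - A^\# A)\boldsymbol{x}_0$, using $\boldsymbol{x}_\# = A^\# \boldsymbol{b}$. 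I expect the main obstacle to be the careful justification that the full-space GMRES genuinely reduces to the nonsingular restriction — in particular, confirming that no Arnoldi vector can leave $\mathcal{R}(A)$, and recognizing that injectivity of $A$ on the whole of $\mathcal{R}(A)$ (rather than merely on each individual $\mathcal{K}_k$) is what simultaneously rules out Case~I breakdown and pins down the explicit solution.
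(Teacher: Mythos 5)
Your proposal is correct. Note, however, that the paper itself contains no proof of this statement: it is quoted directly from the cited references (Brown--Walker, Theorem~2.6, and Wei--Wu, Theorem~3.2), so there is no in-paper argument to compare against. Your reconstruction is sound and self-contained, and it rests on exactly the facts the paper records in Section~1: the equivalence $\mathcal{N}(A)\oplus\mathcal{R}(A)=\mathbb{R}^n \iff \ind(A)\leq 1$, the fact that $A^\#$ restricted to $\mathcal{R}(A)$ is the inverse of $A\rvert_{\mathcal{R}(A)}$, and the identity $A^\#\!A = P_{\mathcal{R}(A),\mathcal{N}(A)}$. The key chain of observations --- $\boldsymbol{r}_0\in\mathcal{R}(A)$, hence $\mathcal{K}_k\subseteq\mathcal{R}(A)$ by $A$-invariance, hence $\dim A\mathcal{K}_k = \dim\mathcal{K}_k$ by injectivity of $A$ on $\mathcal{R}(A)$, which rules out the harmful Case~I and leaves only the Case~II termination with zero residual --- is precisely the classical route taken in the cited literature, and your identification of the limit via uniqueness of the solution of $A\boldsymbol{z}=\boldsymbol{r}_0$ in $\mathcal{R}(A)$ correctly yields $\boldsymbol{x}_0 + A^\#\boldsymbol{r}_0 = \boldsymbol{x}_\# + (\mathrm{I}-A^\#\!A)\boldsymbol{x}_0$. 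Two cosmetic points only: the hypothesis gives $\ind(A)\leq 1$ rather than $\ind(A)=1$ (equality fails when $A$ is nonsingular, in which case $A^\#=A^{-1}$ and the claim is the standard nonsingular result), and at the terminal step one still has $\dim\mathcal{K}_{m+1}<m+1$ in the sense of the paper's breakdown definition, so strictly speaking the conclusion is that no breakdown occurs \emph{before} a solution is determined --- which is exactly what your Case~II remark captures.
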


The condition $\mathcal{N}(A) \cap \mathcal{R}(A) = \lbrace \boldsymbol{0} \rbrace$ is equivalent to $\rank ([U_1, V_2]) = n$, or $\mathcal{R}(U_1) \cap \mathcal{R}(V_2) = \lbrace \boldsymbol{0} \rbrace$ \cite[Lemma~7.2.1]{CampbellMeyer2009}.
Then, we have $P_{\mathcal{R}(A), \mathcal{N}(A)} = A^\# \! A = ff[U_1, \mathrm{O}] [U_1, V_2]^{-1} = \break U_1 (V_1^\mathsf{T} U_1)^{-1} V_1^\mathsf{T}$, because of \cite[Exercise 30, p.~167]{Ben-IsraelGreville2003} and
\begin{align*}
[U_1, V_2]^{-1} =
\begin{bmatrix}
(V_1^\mathsf{T} U_1)^{-1} & \mathrm{O} \\
- V_2 U_1 (V_1^\mathsf{T} U_1)^{-1} & \mathrm{I}
\end{bmatrix}
[V_1, V_2]^\mathsf{T}.
\end{align*}
Thus, $\kappa (A^\# \! A) = \kappa (V_1^\mathsf{T} U_1)$ holds.

\bigskip

For a special class of singular matrices, GMRES determines a least squares solution.

\begin{theorem}[{\cite[Theorem~2.4]{BrownWalker1997}}] \label{th:GMRESconv_BW}
	If $\mathcal{R}(A) = \mathcal{R}(A^\mathsf{T})$, then GMRES determines a solution of $\min_{\boldsymbol{x} \in \mathbb{R}^n} \| \boldsymbol{b}  - A \boldsymbol{x} \|$ without breakdown for all $\boldsymbol{b} \in \mathbb{R}^n$ and for all $\boldsymbol{x}_0 \in \mathbb{R}^n$.
\end{theorem}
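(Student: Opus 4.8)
The plan is to exploit the fact that range-symmetry, $\mathcal{R}(A)=\mathcal{R}(A^\mathsf{T})$, is equivalent to $\mathcal{N}(A)=\mathcal{N}(A^\mathsf{T})=\mathcal{R}(A)^\perp$, so that $\mathbb{R}^n$ is the \emph{orthogonal} direct sum $\mathcal{R}(A)\oplus\mathcal{N}(A)$. First I would record three consequences used throughout: $\mathcal{N}(A)\cap\mathcal{R}(A)=\{\boldsymbol{0}\}$ (hence $\ind(A)=1$ and $A^\#$ exists); the restriction $A\rvert_{\mathcal{R}(A)}\colon\mathcal{R}(A)\to\mathcal{R}(A)$ is a bijection; and the least squares residual $\boldsymbol{r}_*=P_{\mathcal{N}(A^\mathsf{T})}\boldsymbol{b}$ lies in $\mathcal{N}(A^\mathsf{T})=\mathcal{N}(A)$, so that $A\boldsymbol{r}_*=\boldsymbol{0}$. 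In the consistent case $\boldsymbol{b}\in\mathcal{R}(A)$ one has $\boldsymbol{r}_*=\boldsymbol{0}$ and $\boldsymbol{r}_0\in\mathcal{R}(A)$, the least squares minimum is $0$, and since $\mathcal{N}(A)\cap\mathcal{R}(A)=\{\boldsymbol{0}\}$ the claim is already given by \cref{th:GMRESconv_SS}. The new content is therefore the inconsistent case $\boldsymbol{r}_*\neq\boldsymbol{0}$.

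Writing $\boldsymbol{r}_0=\boldsymbol{r}_*+\boldsymbol{c}$ with $\boldsymbol{c}=\boldsymbol{r}_0\rvert_{\mathcal{R}(A)}$ as in the residual decomposition above, and using $A^j\boldsymbol{r}_0=A^j\boldsymbol{c}$ for $j\geq1$, I would decouple the GMRES minimization: since every $A\boldsymbol{z}\in\mathcal{R}(A)$ is orthogonal to $\boldsymbol{r}_*$, the Pythagorean identity gives $\|\boldsymbol{r}_0-A\boldsymbol{z}\|^2=\|\boldsymbol{r}_*\|^2+\|\boldsymbol{c}-A\boldsymbol{z}\|^2$ for every $\boldsymbol{z}\in\mathcal{K}_k$. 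Thus the $k$th GMRES residual attains the least squares minimum $\|\boldsymbol{r}_*\|$ exactly when $\boldsymbol{c}\in A\mathcal{K}_k=\operatorname{span}\{A\boldsymbol{c},\dots,A^k\boldsymbol{c}\}$. That this occurs in finitely many steps follows from the minimal polynomial $\psi(t)=t^d+\dots+c_1t+c_0$ of $A\rvert_{\mathcal{R}(A)}$: invertibility forces $c_0\neq0$, whence $\boldsymbol{c}=-c_0^{-1}(A^d\boldsymbol{c}+\dots+c_1 A\boldsymbol{c})\in A\mathcal{K}_d$, and a least squares solution is produced at the smallest such step $k^\ast$.

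The crux, and the step I expect to be the main obstacle, is to verify that no breakdown impedes GMRES before step $k^\ast$. I would first show $\mathcal{N}(A)\cap\mathcal{K}_k\subseteq\operatorname{span}\{\boldsymbol{r}_*\}$: any $\boldsymbol{w}\in\mathcal{N}(A)\cap\mathcal{K}_k$ splits as $\alpha\boldsymbol{r}_*+\boldsymbol{d}$ with $\boldsymbol{d}\in\mathcal{R}(A)$, and $\boldsymbol{w}\in\mathcal{N}(A)=\mathcal{R}(A)^\perp$ forces $\boldsymbol{d}=P_{\mathcal{R}(A)}\boldsymbol{w}=\boldsymbol{0}$. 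Hence a rank drop $\dim A\mathcal{K}_k<\dim\mathcal{K}_k$ can occur only once $\boldsymbol{r}_*\in\mathcal{K}_k$, equivalently $\boldsymbol{c}=\boldsymbol{r}_0-\boldsymbol{r}_*\in\mathcal{K}_k$. I would then argue that $\boldsymbol{c}\in\mathcal{K}_k$ already forces $\boldsymbol{c}\in A\mathcal{K}_{k-1}$, by inserting a representation $\boldsymbol{c}=\gamma\boldsymbol{r}_0+\sum_{j=1}^{k-1}\delta_j A^j\boldsymbol{c}$ and splitting along $\mathcal{R}(A)\oplus\mathcal{N}(A)$ to annihilate the component $\gamma\boldsymbol{r}_*$ (so $\gamma=0$). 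By minimality of $k^\ast$ this is impossible for $k\leq k^\ast$, so $\mathcal{N}(A)\cap\mathcal{K}_k=\{\boldsymbol{0}\}$ and $\dim A\mathcal{K}_k=\dim\mathcal{K}_k=k$ for every $k\leq k^\ast$: GMRES runs without breakdown and returns a least squares solution at step $k^\ast$. Two points will demand care: confirming $\dim\mathcal{K}_k=k$ up to $k^\ast$, so that the Arnoldi recurrence does not stall prematurely; and checking that the breakdown that eventually sets in beyond step $k^\ast$---which only signals that the linear system $A\boldsymbol{x}=\boldsymbol{b}$ is inconsistent, a Case~I breakdown in the classification above---arises only after the least squares solution has already been secured, so that it does not affect the conclusion.
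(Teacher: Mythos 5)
The paper itself does not prove this statement: it is imported verbatim from Brown and Walker (their Theorem~2.4), so there is no internal proof to measure your argument against --- the closest the paper comes is the discussion in \cref{sec:GMR_EP}, which uses the same facts ($\boldsymbol{r}_* \in \mathcal{N}(A^\mathsf{T}) = \mathcal{N}(A)$, $A\boldsymbol{r}_* = \boldsymbol{0}$) for conditioning estimates rather than for convergence. Judged on its own, your proof is sound and self-contained, and it reconstructs the substance of the cited result: the orthogonal splitting $\mathbb{R}^n = \mathcal{R}(A) \oplus \mathcal{N}(A)$, the Pythagorean decoupling $\| \boldsymbol{r}_0 - A\boldsymbol{z} \|^2 = \| \boldsymbol{r}_* \|^2 + \| \boldsymbol{c} - A\boldsymbol{z} \|^2$, the minimal-polynomial argument for finite termination (with $c_0 \ne 0$ because $A\rvert_{\mathcal{R}(A)}$ is invertible), and your two lemmas --- $\mathcal{N}(A) \cap \mathcal{K}_k \subseteq \mathrm{span}\lbrace \boldsymbol{r}_* \rbrace$, and $\boldsymbol{c} \in \mathcal{K}_k \Rightarrow \boldsymbol{c} \in A\mathcal{K}_{k-1}$ (where inconsistency, $\boldsymbol{r}_* \ne \boldsymbol{0}$, is what forces $\gamma = 0$) --- are both correct and are exactly what is needed to exclude a rank drop at steps $1, \dots, k^*$. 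Deferring the consistent case to \cref{th:GMRESconv_SS} is legitimate, since EP implies $\mathcal{N}(A) \cap \mathcal{R}(A) = \lbrace \boldsymbol{0} \rbrace$.

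The one obligation you flag but do not discharge --- $\dim \mathcal{K}_k = k$ for all $k \le k^*$ --- is a real part of the no-breakdown claim (breakdown includes $\dim \mathcal{K}_k < k$, not only the rank drop), but it closes with the tools you already have, so the gap is fillable rather than fatal. Suppose $\dim \mathcal{K}_k < k$ for some $k \le k^*$, and let $j$ be the largest index with $\dim \mathcal{K}_j = j$; then $j < k$ and $\mathcal{K}_{j+1} = \mathcal{K}_j$, so $A \mathcal{K}_j \subseteq \mathcal{K}_j$. If $\dim A\mathcal{K}_j < j$, your first lemma gives $\boldsymbol{r}_* \in \mathcal{K}_j$, hence $\boldsymbol{c} \in \mathcal{K}_j$, hence by your second lemma $\boldsymbol{c} \in A\mathcal{K}_{j-1}$, contradicting the minimality of $k^*$. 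If instead $\dim A\mathcal{K}_j = j$, then $A\mathcal{K}_j = \mathcal{K}_j$, so $\boldsymbol{r}_0 \in \mathcal{K}_j \subseteq \mathcal{R}(A)$ and $\boldsymbol{r}_* = \boldsymbol{0}$, contradicting inconsistency. You should also dispose of the degenerate case $\boldsymbol{c} = \boldsymbol{0}$ (i.e., $\boldsymbol{r}_0 = \boldsymbol{r}_*$), where $\boldsymbol{x}_0$ is already a least squares solution and the immediate Case~I breakdown is harmless. With these two insertions your proposal is a complete, correct proof, and your closing remark stands: the breakdown that eventually occurs beyond step $k^*$ only certifies inconsistency and happens after the least squares solution has been determined.
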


A matrix $A \in \mathbb{R}^{n \times n}$ satisfying $\mathcal{N}(A) \cap \mathcal{R}(A) = \lbrace \boldsymbol{0} \rbrace$, is called a GP (group) matrix. A  GP matrix satisfying in addition $\mathcal{R}(A^\mathsf{T}) = \mathcal{R}(A)$, or equivalently $\mathcal{R}(U_1) = \mathcal{R}(V_1)$, is called an EP (equal projection) or range-symmetric matrix.
Now, we characterize the GP and EP matrices in terms of their singular value decompositions.
The matrix $A$ can be decomposed into
\begin{align*}
A =
U
\begin{bmatrix}
\Sigma_r K & \Sigma_r L \\
\mathrm{O} & \mathrm{O}
\end{bmatrix}
U^\mathsf{T}
=
V
\begin{bmatrix}
K \Sigma_r & \mathrm{O} \\
M \Sigma_r & \mathrm{O}
\end{bmatrix}
V^\mathsf{T}
\end{align*}
with the identity $K K^\mathsf{T} + L L^\mathsf{T} = \mathrm{I}$, where $K = V_1^\mathsf{T} \! U_2$, $L = V_1^\mathsf{T} U_1$, and $M = V_2^\mathsf{T} U_1$, and the pseudoinverse matrix of $A$ can be decomposed into
\begin{align*}
A^\dag = U 
\begin{bmatrix}
K^\mathsf{T} \Sigma_r^{-1} & \mathrm{O} \\
L^\mathsf{T} \Sigma_r^{-1} & \mathrm{O}
\end{bmatrix}
U^\mathsf{T}
= V
\begin{bmatrix}
\Sigma_r^{-1} \! K^\mathsf{T} & \Sigma_r^{-1} \! M^\mathsf{T} \\
\mathrm{O} & \mathrm{O}
\end{bmatrix}
V^\mathsf{T}.
\end{align*}
The equivalences for GP matrices
\begin{align*}
\mathcal{R}(U_1)  \cap \mathcal{R}(V_2) = \lbrace \boldsymbol{0} \rbrace \Longleftrightarrow \rank ([U_1, V_2]) = n \Longleftrightarrow V_1^\mathsf{T} \! U_1 \mbox{($= K$) is nonsingular}
\end{align*}
follow from the equation
\begin{align*}
[V_1, V_2]^\mathsf{T} [U_1, V_2] =
\begin{bmatrix}
V_1^\mathsf{T} U_1 & \mathrm{O} \\
V_2^\mathsf{T} U_1 & \mathrm{I}
\end{bmatrix}.
\end{align*}
The group inverse of a GP matrix $A$ can be decomposed into
\begin{align*}
A^\# = U
\begin{bmatrix}
K^{-1} \Sigma_r^{-1} & K^{-1} \Sigma_r^{-1} K^{-1} L \\
\mathrm{O} & \mathrm{O}
\end{bmatrix}
U^\mathsf{T} 
= V 
\begin{bmatrix}
\Sigma_r^{-1} \! K^{-1} & \mathrm{O} \\
M K^{-1} \Sigma_r^{-1} \! K^{-1} & \mathrm{O}
\end{bmatrix}
V^\mathsf{T}.
\end{align*}
See \cite[Section 1]{BaksalaryTrenkler2008}.
The conditioning of $K = V_1^\mathsf{T} \! U_1$ is independent of the conditioning of $A$ but it gives a difficulty in solving singular linear systems with GMRES.
The EP case $\mathcal{R}(A^\mathsf{T}) = \mathcal{R}(A)$ is equivalent to that $K = V_1^\mathsf{T} U_1$ is orthogonal, since $L = V_1^\mathsf{T} U_2 = \mathrm{O}$.

Next, we characterize GP and EP matrices in terms of the principal angles.
In the EP case $\mathcal{R}(A^\mathsf{T}) = \mathcal{R}(A)$, the matrix $V_1^\mathsf{T} U_1$ is orthogonal and the cosines of the principal angles between $\mathcal{R}(A)$ and $\mathcal{R}(A^\mathsf{T})$ are all zero.
In the GP case, since the columns of $U_1$ and $V_1$ form bases of $\mathcal{R}(A)$ and $\mathcal{R}(A^\mathsf{T})$, respectively, the cosines of the canonical angles between $\mathcal{R}(A)$ and $\mathcal{R}(A^\mathsf{T})$ are the singular values of $V_1^\mathsf{T} U_1$ \cite[section 1.2]{Chatelin2012}.
Hence, the condition number of $V_1^\mathsf{T} \! U_1$ is related to the extremal principal angles.

Note that due to $\| V_1^\mathsf{T} U_1 \| \leq 1$ all singular values of $V_1^\mathsf{T} U_1$ are less than or equal to 1 and the number of those equal exactly to 1 gives the dimension of
$\mathcal{R}(U_1) \cap \mathcal{R}(V_1)$. So, if  $\| V_1^\mathsf{T} U_1 \| < 1$, then  $\mathcal{R}(U_1) \cap \mathcal{R}(V_1) =
\mathcal{R}(A) \cap \mathcal{R}(A^\mathsf{T}) = \lbrace \boldsymbol{0} \rbrace$. If a matrix $A \in \mathbb{R}^{n \times n}$ 
satisfies $\mathcal{R}(A) \cap \mathcal{R}(A^\mathsf{T}) = \lbrace \boldsymbol{0} \rbrace$, it is called a disjoint range (DR) matrix \cite{BaksalaryTrenkler2011}.

\section{GMRES and EP matrices} \label{sec:GMR_EP}
As was already noted, the GMRES iterate $\boldsymbol{x}_k = \boldsymbol{x}_0 + \boldsymbol{z}_k$ solves the least squares problem \cref{eq:prjLS}.
Therefore, the restriction of $A$ to the Krylov subspace $\mathcal{K}_k \subseteq \mathbb{R}^n$ denoted by $A \rvert_{\mathcal{K}_k}$ plays an important role in the numerical behavior of GMRES.
Indeed, the ill-conditioning of $A \rvert_{\mathcal{K}_k}$ was studied and its condition number
\begin{align*}
\kappa(A \rvert_{\mathcal{K}_k}) = \frac{\max_{\boldsymbol{z} \in \mathcal{K}_k \backslash \lbrace \boldsymbol{0} \rbrace} \| A \boldsymbol{z} \| \mbox{\Big /} \| \boldsymbol{z} \|}{\min_{\boldsymbol{z} \in \mathcal{K}_k \backslash \lbrace \boldsymbol{0} \rbrace} \| A \boldsymbol{z} \| \mbox{\Big /} \| \boldsymbol{z} \|}
\end{align*}
was introduced by Brown and Walker in \cite{BrownWalker1997}.
In practical computations, the iterate $\boldsymbol{x}_k$ is computed as $\boldsymbol{x}_k = \boldsymbol{x}_0 + Q_k \boldsymbol{y}_k$, where the columns of $Q_k$ form an orthonormal basis of the Krylov subspace $\mathcal{K}_k$ and the vector $\boldsymbol{y}_k$ is a solution of the extended Hessenberg least squares problem $\min_{\boldsymbol{y} \in \mathbb{R}^k} \| \beta \boldsymbol{e}_1 - H_{k+1, k} \boldsymbol{y} \|$ (see \cref{sec:GMRsing}).
The accuracy of $\boldsymbol{x}_k$ is thus affected directly by the conditioning of the matrix $H_{k+1, k}$, whereas the identity $\kappa(H_{k+1, k}) = \kappa(A \rvert_{\mathcal{K}_k})$ follows from the identities 
\begin{align*}
\lbrace \mathop{\max, \min }\limits_{\boldsymbol{z} \in \mathcal{K}_k \backslash \lbrace \boldsymbol{0} \rbrace} \rbrace \frac{\| A \boldsymbol{z} \|}{\| \boldsymbol{z} \|} = \lbrace \mathop{\max, \min }\limits_{\boldsymbol{w} \in \mathbb{R}^k \backslash \lbrace \boldsymbol{0} \rbrace} \rbrace\frac{\| A Q_k \boldsymbol{w} \|}{\| Q_k \boldsymbol{w} \|} = \lbrace \mathop{\max, \min }\limits_{\boldsymbol{w} \in \mathbb{R}^k \backslash \lbrace \boldsymbol{0} \rbrace} \rbrace \frac{\| H_{k+1,k} \boldsymbol{w} \|}{\| \boldsymbol{w} \|}.
\end{align*}

Next, we give bounds on the extremal singular values of $H_{k+1, k}$.
The norm of the matrix $H_{k+1,k}$ can be always bounded above by that of $A$
\begin{align*}
\| H_{k+1,k} \| = \max_{\boldsymbol{z} \in \mathcal{K}_k \backslash \lbrace \boldsymbol{0} \rbrace} \frac{\| A \boldsymbol{z} \|}{\| \boldsymbol{z} \|} \leq \max_{\boldsymbol{z} \in \mathrm{span} \lbrace \boldsymbol{r}_* \rbrace \oplus \mathcal{R}(A) \backslash \lbrace \boldsymbol{0} \rbrace} \frac{\| A \boldsymbol{z} \|}{\| \boldsymbol{z} \|} \leq \max_{\boldsymbol{z} \in \mathbb{R}^n \backslash \lbrace \boldsymbol{0} \rbrace} \frac{\| A \boldsymbol{z} \|}{\| \boldsymbol{z} \|} = \| A \|.
\end{align*}
This approach cannot be used to bound the $k$th (or smallest) singular value of $H_{k+1, k}$ due to 
\begin{align}
\sigma_k (H_{k+1,k}) = \min_{\boldsymbol{z} \in \mathcal{K}_k \backslash \lbrace \boldsymbol{0} \rbrace}  \frac{\| A \boldsymbol{z} \|}{\| \boldsymbol{z} \|} \geq \min_{\boldsymbol{z} \in \mathrm{span} \lbrace \boldsymbol{r}_* \rbrace \oplus \mathcal{R}(A) \backslash \lbrace \boldsymbol{0} \rbrace } \frac{\| A \boldsymbol{z} \|}{\| \boldsymbol{z} \|} \geq \min_{z \in \mathbb{R}^n \backslash \lbrace \boldsymbol{0} \rbrace} \frac{\| A \boldsymbol{z} \|}{\| \boldsymbol{z} \|} = 0
\label{eq:lbH}
\end{align}
as the last equality holds for $A$ singular.

In the consistent case, the condition number is bounded by $\kappa(H_{k+1,k}) \leq \kappa(A \rvert_{\mathcal{R}(A)})$ from $\mathcal{K}_k \subseteq \mathcal{R}(A)$ and
\begin{align*}
\sigma_k(H_{k+1,k}) \geq \min_{\boldsymbol{z} \in \mathcal{R}(A) \backslash \lbrace \boldsymbol{0} \rbrace} \frac{\| A \boldsymbol{z} \|}{\| \boldsymbol{z} \|},
\end{align*}
where $A \rvert_{\mathcal{R}(A)}$ denotes the restriction of $A$ to the range $\mathcal{R} (A)$.
If $A$ is an EP matrix $\mathcal{R}(A) = \mathcal{R}(A^\mathsf{T}) = \mathcal{N}(A)^{\perp}$, then 
\begin{align*}
\min_{\boldsymbol{z} \in \mathcal{R}(A) \backslash \lbrace \boldsymbol{0} \rbrace} \frac{\| A \boldsymbol{z} \|}{\| \boldsymbol{z} \|} = \min_{\boldsymbol{z} \in \mathcal{R}(A^\mathsf{T}) \backslash \lbrace \boldsymbol{0} \rbrace} \frac{\| A \boldsymbol{z} \|}{\| \boldsymbol{z} \|} = \sigma_r (A) > 0
\end{align*}
and
\begin{align*}
\kappa(A \rvert_{\mathcal{R}(A)}) = \frac{\| A \|}{\min_{\boldsymbol{z} \in \mathcal{R}(A) \backslash \lbrace \boldsymbol{0} \rbrace} \| A \boldsymbol{z} \| / \| \boldsymbol{z} \|}= \kappa(A).
\end{align*}
Indeed, the consistent EP case is similar to the nonsingular case, and the condition number of the extended Hessenberg matrix $H_{k+1, k}$ is bounded by $\kappa(H_{k+1,k}) \leq \kappa(A)$ (cf.\ \cite[Remark 3.2, Theorem 3.6]{ZhangWei2004}).
Consequently, the rank deficiency of the least squares problem \cref{eq:prjLS} cannot occur and GMRES will terminate if a solution is reached at some step with a degeneracy of the Krylov subspace at the next step.

In the inconsistent EP case, the equivalence  $\mathcal{R}(A^\mathsf{T}) = \mathcal{R}(A)$ $\Longleftrightarrow$ $\mathcal{N}(A^\mathsf{T}) = \mathcal{N}(A)$ shows that the nonzero least squares residual $\boldsymbol{r}_* \in \mathcal{N}(A^\mathsf{T})$ belongs also to $\mathcal{N}(A)$ and 
\begin{align*}
\sigma_k (H_{k+1,k}) \geq \min_{\boldsymbol{z} \in \mathrm{span} \lbrace \boldsymbol{r}_* \rbrace \oplus \mathcal{R}(A) \backslash \lbrace \boldsymbol{0} \rbrace } \frac{\| A \boldsymbol{z} \|}{\| \boldsymbol{z} \|} = 0.
\end{align*}
It follows from \cref{eq:prjLS} that  the residual $\boldsymbol{r}_{k-1}$ at step $k-1$ belongs to the Krylov subspace $\mathcal{K}_k$ and satisfies $\boldsymbol{r}_{k-1} - \boldsymbol{r}_* \in \mathcal{R}(A)$.
In addition, due to $A \boldsymbol{r}_* = \boldsymbol{0}$ we have
\begin{align}
\sigma_k (H_{k+1, k}) & = \min_{\boldsymbol{z} \in \mathcal{K}_k \backslash \lbrace \boldsymbol{0} \rbrace} \frac{\| A \boldsymbol{z} \|}{\| \boldsymbol{z} \|} \leq \frac{\| A \boldsymbol{r}_{k-1} \|}{\| \boldsymbol{r}_{k-1} \|} \notag \\
& = \frac{\| A (\boldsymbol{r}_{k-1} - \boldsymbol{r}_*) \|}{\| \boldsymbol{r}_{k-1} \|} \leq \| A \| \frac{\| \boldsymbol{r}_{k-1} - \boldsymbol{r}_* \|}{\| \boldsymbol{r}_{k-1} \|}.
\label{eq:ubArr}
\end{align}
This result was derived in a somewhat different form in \cite[Theorem~2.5]{BrownWalker1997}.
It is clear that in the inconsistent case, the least squares problem \cref{eq:prjLS} becomes ill-conditioned as the GMRES iterate converges to a least squares solution.
This situation is illustrated in \cref{fig:r_EP}.
Note that we also have
\begin{align*}
\sigma_k (H_{k+1, k}) = \min_{\boldsymbol{z} \in \mathcal{K}_k \backslash \lbrace \boldsymbol{0} \rbrace} \frac{\| A \boldsymbol{z} \|}{\| \boldsymbol{z} \|} \leq \frac{\| A \boldsymbol{r}_0 \|}{\| \boldsymbol{r}_0 \|} \leq \frac{\| A \| \| \boldsymbol{r}_0 \rvert_{\mathcal{R}(A^\mathsf{T})} \|}{\| \boldsymbol{r}_0 \|}
=  \frac{\| A \| \| \boldsymbol{r}_0 \rvert_{\mathcal{R}(A)} \|}{\| \boldsymbol{r}_0 \|}.
\end{align*}
This bound indicates that if the norm of  $A \boldsymbol{r}_0$ is too small, then $H_{k+1, k}$ becomes ill-conditioned and the inaccuracy can be expected at all subsequent steps of GMRES.
Finally, since a symmetric matrix is an EP matrix, the above discussion also covers the MINRES method \cite{PaigeSaunders1975} applied to symmetric singular systems.

The conditioning of the extended Hessenberg matrix $H_{k+1, k}$ for GMRES and its relation to the conditioning of $A$ are illustrated on small examples.
First for simplicity, consider applying GMRES with $\boldsymbol{x}_0 = \boldsymbol{0}$ to $A \boldsymbol{x} = \boldsymbol{b}$, where
\begin{align}
A =
\begin{bmatrix}
1 & 0 \\
0 & 0
\end{bmatrix}, \quad
\boldsymbol{b} =
\begin{bmatrix}
1 \\
\varepsilon
\end{bmatrix}, \quad
\varepsilon > 0.
\label{eq:2x2exactEP}
\end{align}
The matrix $A$ is EP, it has the range $\mathcal{R}(A) = \mathcal{R}(A^\mathsf{T}) = \mathrm{span} \lbrace [1, 0]^\mathsf{T} \rbrace$ and the nullspace $\mathcal{N}(A) = \mathcal{N}(A^\mathsf{T}) = \mathrm{span} \lbrace [0, 1]^\mathsf{T} \rbrace$, and its minimum nonzero singular value is $\sigma_1(A) = \break \min_{\boldsymbol{z} \in \mathcal{R}(A) \backslash \lbrace \boldsymbol{0} \rbrace} \| A \boldsymbol{z} \| / \| \boldsymbol{z} \| = 1$.
The first two steps of the Arnoldi process for $A$ and the initial vector $\boldsymbol{q}_1 = \boldsymbol{b} / \| \boldsymbol{b} \|$ give the decomposition $A Q_2 = Q_2 H_{2, 2} $, where
\begin{align*}
Q_2= 	[\boldsymbol{q}_1, \boldsymbol{q}_2 ]  = \frac{1}{\sqrt{1+\varepsilon^2}}
\begin{bmatrix}
1 &   \varepsilon \\
\varepsilon & -1
\end{bmatrix}, \quad
H_{2,2} = \frac{1}{1+\varepsilon^2}
\begin{bmatrix}
1 & \varepsilon \\
\varepsilon & \varepsilon^2
\end{bmatrix}.
\end{align*}
Hence, $H_{2, 2}$ is singular and we have $\sigma_1 (A) = \sigma_1 (H_{2, 2}) = 1$.
Solving \break $\min_{\boldsymbol{y} \in \mathbb{R}^2} \| \beta \boldsymbol{e}_1 - H_{2, 2} \boldsymbol{y} \|$, where $\beta = \sqrt{1+ \varepsilon^2}$, we have the minimum-norm solution $\boldsymbol{y}_2 = \break 1/\sqrt{1+\varepsilon^2} [1, \varepsilon]^\mathsf{T}$ and $\| \boldsymbol{y}_2 \| = 1$.
Therefore, $\| \boldsymbol{x}_2 \| = \| \boldsymbol{y}_2 \|$ and thus the norm of the iterate is not large, even if $\varepsilon$ is very small.
It is also clear that for $\varepsilon=0$ the system \cref{eq:2x2exactEP} becomes consistent and then GMRES will deliver the minimum norm solution $\boldsymbol{x}_* = [1, 0]^\mathsf{T}$ in one iteration.

For comparison, we also consider the $2 \times 2$ nonsingular ill-posed linear system $A \boldsymbol{x} = \boldsymbol{b}$, where 
\begin{align}
A = 
\begin{bmatrix}
1 & 0 \\
0 & \delta
\end{bmatrix},
\quad
\boldsymbol{b} = 
\begin{bmatrix}
1\\
\varepsilon
\end{bmatrix},
\quad
\delta, \varepsilon > 0
\label{eq:2x2nearEP}
\end{align}
with the condition number $\kappa(A)= 1 / \delta$, where $\delta$ is a small scalar and the right-hand side $\boldsymbol{b}$ is contaminated by the error $[0, \varepsilon]^\mathsf{T}$.
The exact solution is $[1, 0]^\mathsf{T}$ and the error-contaminated solution is $[1, \varepsilon/\delta]^\mathsf{T}$.
GMRES applied to $A \boldsymbol{x} = \boldsymbol{b}$ with $\boldsymbol{x}_0 = \boldsymbol{0}$ gives the same $Q_2$ as the one for \cref{eq:2x2exactEP} but the different Hessenberg matrix
\begin{align*}
H_{2, 2} = \frac{1}{1 + \varepsilon^2} 
\begin{bmatrix}
1 + \delta \varepsilon^2 & (1-\delta) \varepsilon\\
(1-\delta ) \varepsilon & \delta + \varepsilon^2
\end{bmatrix},
\end{align*}
whose condition number is the same as that of $A$.
\begin{figure}[t]
	\centering
	\includegraphics[width=160pt]{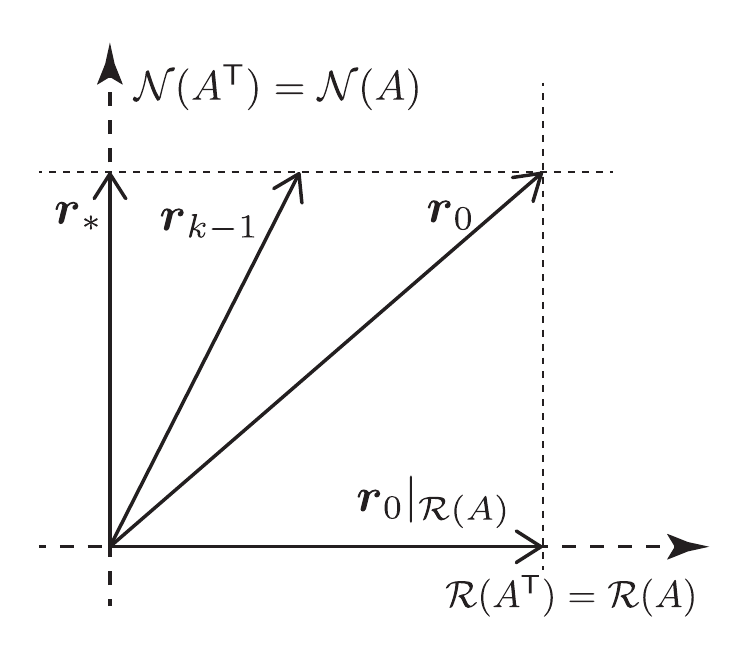}
	\caption{Geometric illustration of residual vectors in the EP case.}
	\label{fig:r_EP}
\end{figure}
Thus, we have the upper bound $\| A^{-1} \boldsymbol{b} \| = \| \boldsymbol{x}_2 \|=\| \boldsymbol{y}_2 \| \leq \| A^{-1} \| \| \boldsymbol{b} \| = \| H_{2, 2}^{-1} \| \| \boldsymbol{r}_0 \| = \sqrt{1+\varepsilon^2} / \delta$.
This means that if $\varepsilon \ne 0$ and $\delta$ is small, then the iterates computed by GMRES can be inaccurate.
In the exactly singular case with $\delta = 0$, this is not the case. 

In the following numerical examples, we examine the accuracy of the GMRES iterate with respect to the degree of consistency of linear systems by using the test matrix and right hand side vectors
\begin{align}
A = \begin{bmatrix} D & 0 \\ 0 & 0 \end{bmatrix} \in \mathbb{R}^{128 \times 128}, \quad \boldsymbol{b} =
\begin{bmatrix}
\boldsymbol{\gamma} \\
\boldsymbol{\delta}
\end{bmatrix},
\label{eq:test_inc}
\end{align}
where $D = \diag (10^{\frac{0}{63}}, 10^{\frac{- 4}{63}}, 10^{\frac{- 8}{63}}, \dots, 10^{-4}) \in \mathbb{R}^{64 \times 64}$,  $\boldsymbol{\gamma} = [\gamma, \gamma, \dots, \gamma]^\mathsf{T} \in \mathbb{R}^{64}$ and $\boldsymbol{\delta} = \left[\delta, \delta, \dots, \delta\right]^\mathsf{T} \in \mathbb{R}^{64}$.
Hence, $A$ has the condition number $10^4$, and $\boldsymbol{b} \not \in \mathcal{R}(A)$ $\Longleftrightarrow$ $\delta \ne0$.
The degree of inconsistency of the linear system $A \boldsymbol{x} = \boldsymbol{b}$ can be controlled by the ratio between $\gamma$ and $\delta$, as $\| \boldsymbol{b} \rvert_{\mathcal{R}(A)} \| = 8 \gamma$ and $\| \boldsymbol{b} \rvert_{\mathcal{N}(A^\mathsf{T})} \| = 8 \delta$ hold.
Since $\mathcal{R}(A) = \mathcal{R}(A^\mathsf{T})$, GMRES should determine the least squares solution of $\min_{\boldsymbol{x} \in \mathbb{R}^n} \| \boldsymbol{b} - A \boldsymbol{x} \|$ for all $\boldsymbol{b} \in \mathbb{R}^{128}$ (\cref{th:GMRESconv_BW}).
Throughout all our numerical experiments, we use GMRES and RR-GMRES with the Householder orthogonalization process \cite{Walker1988} to ensure the best possible orthogonality among the Arnoldi basis vectors $\boldsymbol{q}_1$, $\boldsymbol{q}_2$, \dots, $\boldsymbol{q}_k$ and we compute the $k$th residual $\boldsymbol{r}_k = \boldsymbol{b} - A \boldsymbol{x}_k$ explicitly from $\boldsymbol{x}_k$ by solving the extended Hessenberg least squares problem $\min_{\boldsymbol{y} \in \mathbb{R}^k} \| \beta \boldsymbol{e}_1 - H_{k+1, k} \boldsymbol{y} \|$ with the Matlab backslash solver, which utilizes column pivoting.
A mathematically equivalent solution of $\min_{\boldsymbol{y} \in \mathbb{R}^k} \| \beta \boldsymbol{e}_1 - H_{k+1, k} \boldsymbol{y} \|$ was presented in \cite{LiaoHayamiYin2016}.

\cref{fig:relres} shows the relative residual norm $\| A^\mathsf{T} \boldsymbol{r}_k \| / \| A^\mathsf{T} \boldsymbol{b} \|$ versus the number of iterations of GMRES in the weakly inconsistent cases $(\gamma, \delta) = (1, 0)$, $ (1, 10^{-12})$, $(1, 10^{-8})$, and $(1, 10^{-4})$ on the left, and in the strongly inconsistent cases $(\gamma, \delta) = (1, 1)$, $(10^{-4}, 1)$, $(10^{-8}, 1)$, and $(10^{-12}, 1)$ on the right.
Note that the relative residual norm $\| A^\mathsf{T} \boldsymbol{r}_k \| / \| A^\mathsf{T} \boldsymbol{b} \|$ is associated with the normal equations $A^\mathsf{T} \! A \boldsymbol{x} = A^\mathsf{T} \boldsymbol{b}$, which are mathematically equivalent to the linear least squares problem $\min_{\boldsymbol{x} \in \mathbb{R}^n} \| \boldsymbol{b} - A \boldsymbol{x} \|$.
Similarly, \cref{fig:reserr,fig:sv} show the relative residual error norm $\| \boldsymbol{r}_k - \boldsymbol{r}_* \| / \| \boldsymbol{r}_k \|$ (cf.~equation \cref{eq:ubArr}) and the extremal singular values of $A$ and $H_{k+1, k}$, respectively.
If the inconsistency is small ($ \delta \ll \gamma$), then GMRES is sufficiently accurate (\cref{fig:relres_W}); otherwise the relative residual norm $\| A^\mathsf{T} \boldsymbol{r}_k \| / \| A^\mathsf{T} \boldsymbol{b} \|$ stagnates before attaining the accuracy on the level of $u \kappa (A)$ (\cref{fig:relres_S}), where $u \simeq 1.1 \cdot 10^{-16}$ is the unit roundoff.
In contrast to the nonsingular case, GMRES deteriorates not only due to  the condition number of $A$ but also due to the inconsistency measured here by $\delta > 0$.
For strongly inconsistent systems with $\delta \gg \gamma$, i.e., for $\boldsymbol{r}_0$ close to $\mathcal{N}(A)$, even though $\| A^\mathsf{T} \boldsymbol{r}_k \| / \| A^\mathsf{T} \boldsymbol{b} \|$ is large and stagnates, and $\| \boldsymbol{r}_* \|$ and hence $\| \boldsymbol{r}_k \|$ are large, the residual $\boldsymbol{r}_k$ approaches $\boldsymbol{r}_*$.
\cref{fig:sv_S} shows that for strongly inconsistent problems, $H_{k+1, k}$ has a condition number significantly larger than $A$, tends to become more ill-conditioned in the subsequent steps, and becomes numerically rank-deficient with $u \| H_{k+1, k} \| \| H_{k+1, k}^\dag \| \geq 1$ as the iteration proceeds.
In particular, for $\gamma=0$ and $\delta = 1$, GMRES breaks down at step $1$ but gives a least squares solution.
Comparing \cref{fig:reserr_S,fig:sv_S}, we see that in these cases the bound \cref{eq:ubArr} gives a reasonably good upper estimate for the smallest singular value of $H_{k+1, k}$.
The behavior described by this example is illustrated on a practical problem from a discretization of a partial differential equation with periodic boundary conditions in \cite[Experiment 4.2]{BrownWalker1997}.

A remedy for the ill-conditioning occurring in GMRES due to inconsistency is to form the Krylov subspace $\mathcal{K}_k (A, A \boldsymbol{r}_0)$ by starting with the initial vector $A \boldsymbol{r}_0$ in $\mathcal{R}(A)$ instead of $\boldsymbol{r}_0$ as is done in RR-GMRES.
Note that, on the other hand,  the RR-GMRES residual norm is always larger than or equal to the GMRES residual norm (see \cref{sec:GMRsing}).
Similarly to the above, we show numerical results for RR-GMRES on the same inconsistent linear systems \cref{eq:test_inc}.
\Cref{fig:RRrelres,fig:RRreserr,fig:RRsv} show the same quantities as \cref{fig:relres,fig:reserr,fig:sv} for RR-GMRES.
For any inconsistency parameter $\delta > 0$, the condition number of $H_{k+1, k}^R$ is bounded above by the condition number of $A$ and RR-GMRES is sufficiently accurate, as
\begin{align*}
\sigma_k (H_{k+1, k}^\mathrm{R}) & = \min_{\boldsymbol{z} \in \mathcal{K}_k (A, A \boldsymbol{r}_0) \backslash \lbrace \boldsymbol{0} \rbrace} \frac{\| A \boldsymbol{z} \|}{\| \boldsymbol{z} \|} \\
& \geq \min_{\boldsymbol{z} \in \mathcal{R}(A) \backslash \lbrace \boldsymbol{0} \rbrace} \frac{\| A \boldsymbol{z} \|}{\| \boldsymbol{z} \|} = \min_{\boldsymbol{z} \in \mathcal{R}(A^\mathsf{T}) \backslash \lbrace \boldsymbol{0} \rbrace} \frac{\| A \boldsymbol{z} \|}{\| \boldsymbol{z} \|} = \sigma_r (A) 
\end{align*}
for $\mathcal{R}(A) = \mathcal{R}(A^\mathsf{T})$.
Thus, the accuracy of the RR-GMRES iterate is affected 
\begin{figure}[ht!]
	\centering
	\subfloat[Weakly inconsistent cases ($\gamma = 1$). \label{fig:relres_W}]{\includegraphics[width=180pt]{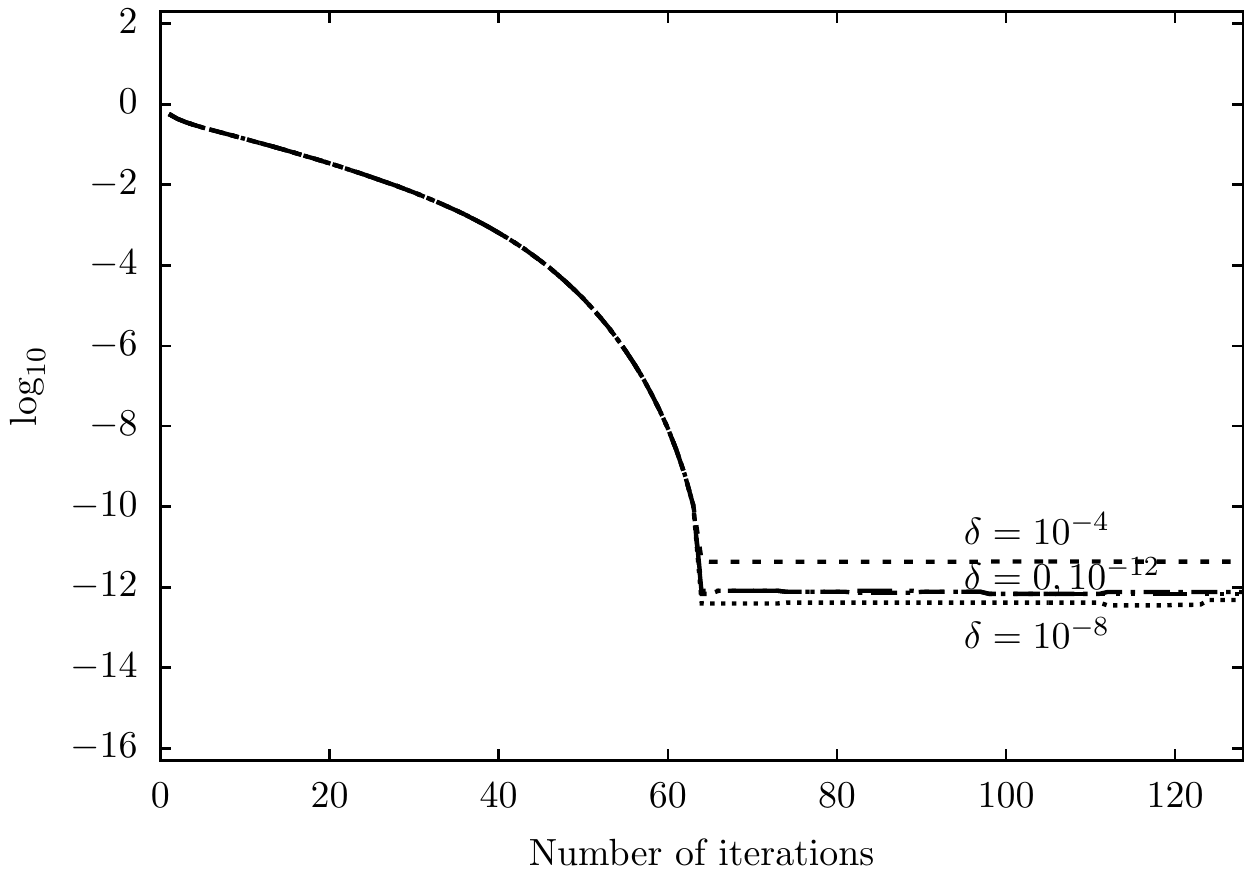}}
	\hspace{6pt}
	\subfloat[Strongly inconsistent cases ($\delta = 1$). \label{fig:relres_S}]{\includegraphics[width=180pt]{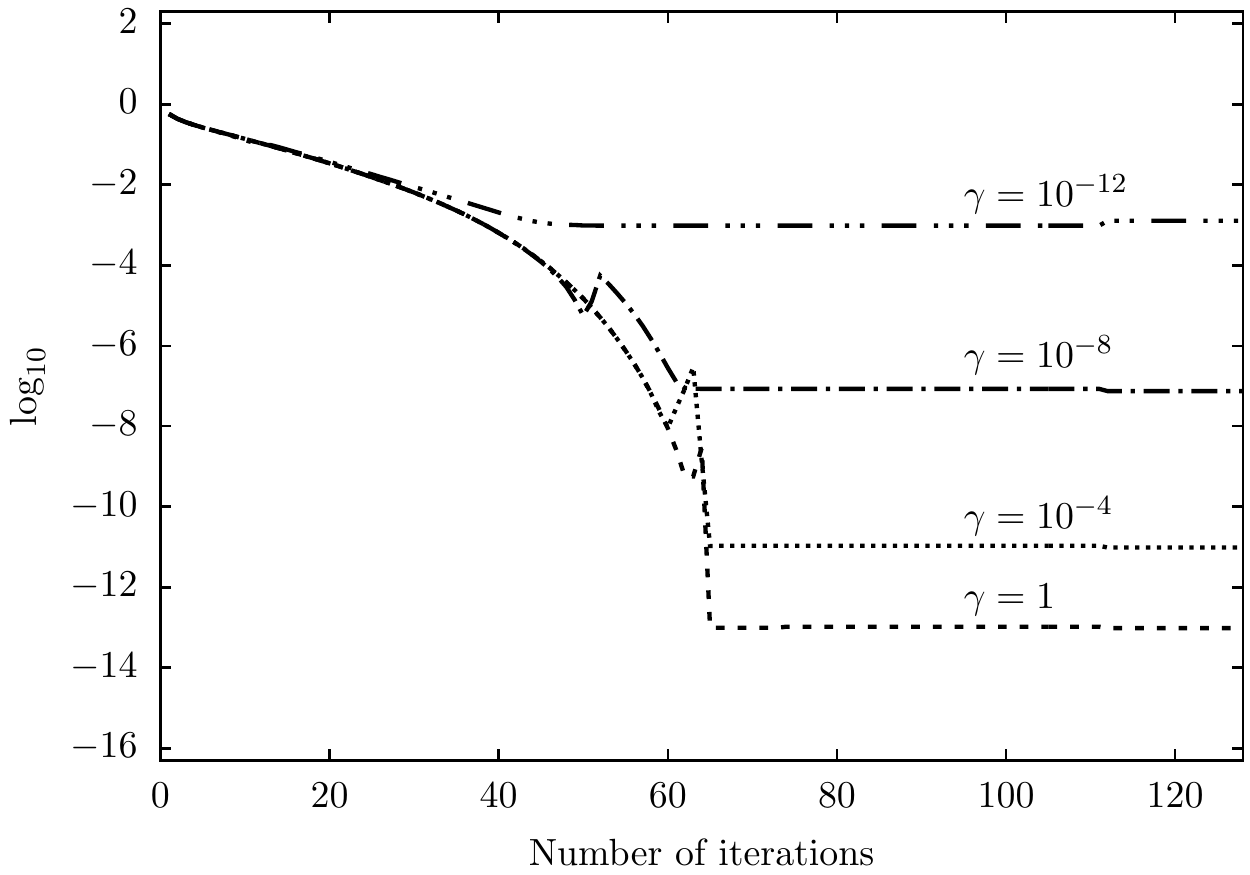}}
	\caption{Relative residual norm $\| A^\mathsf{T} \boldsymbol{r}_k \| / \| A^\mathsf{T} \boldsymbol{b} \|$ for GMRES.}
	\label{fig:relres}
	\centering
	\subfloat[Weakly inconsistent cases ($\gamma = 1$). \label{fig:reserr_W}]{\includegraphics[width=180pt]{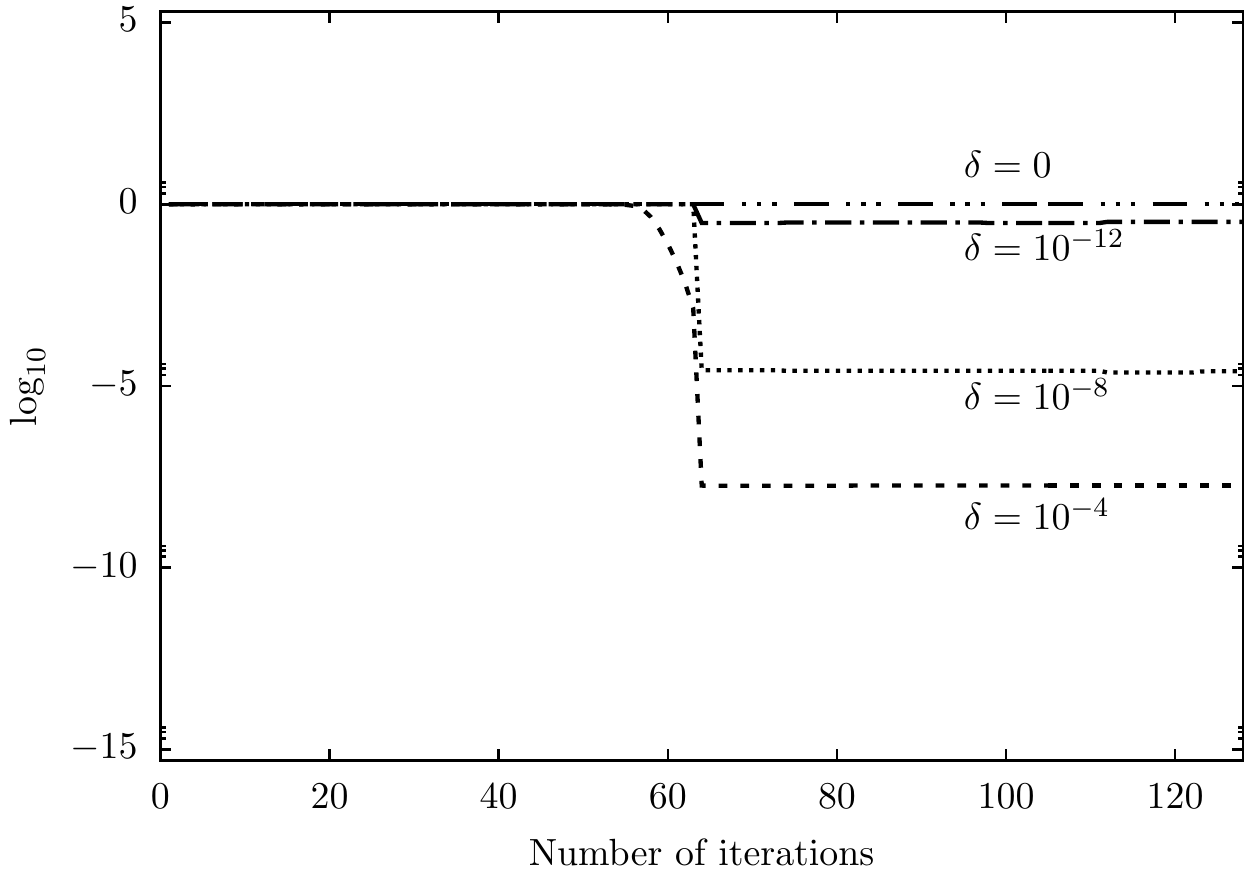}}
	\hspace{6pt}
	\subfloat[Strongly inconsistent cases ($\delta = 1$). \label{fig:reserr_S}]{\includegraphics[width=180pt]{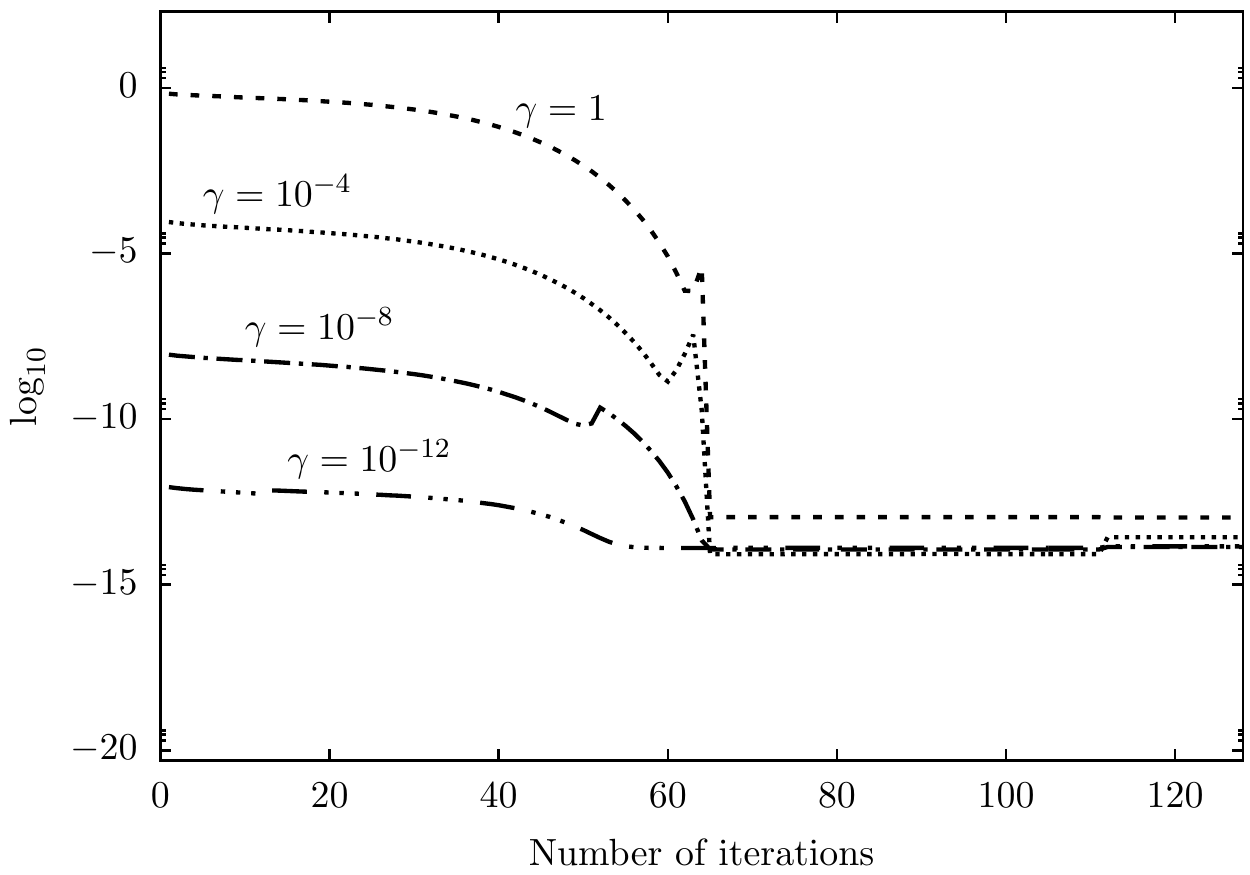}}
	\caption{Relative residual error norm $\| \boldsymbol{r}_k - \boldsymbol{r}_* \| / \| \boldsymbol{r}_k \|$ for GMRES.}
	\label{fig:reserr}
	\subfloat[Weakly inconsistent cases ($\gamma = 1$).]{\includegraphics[width=180pt]{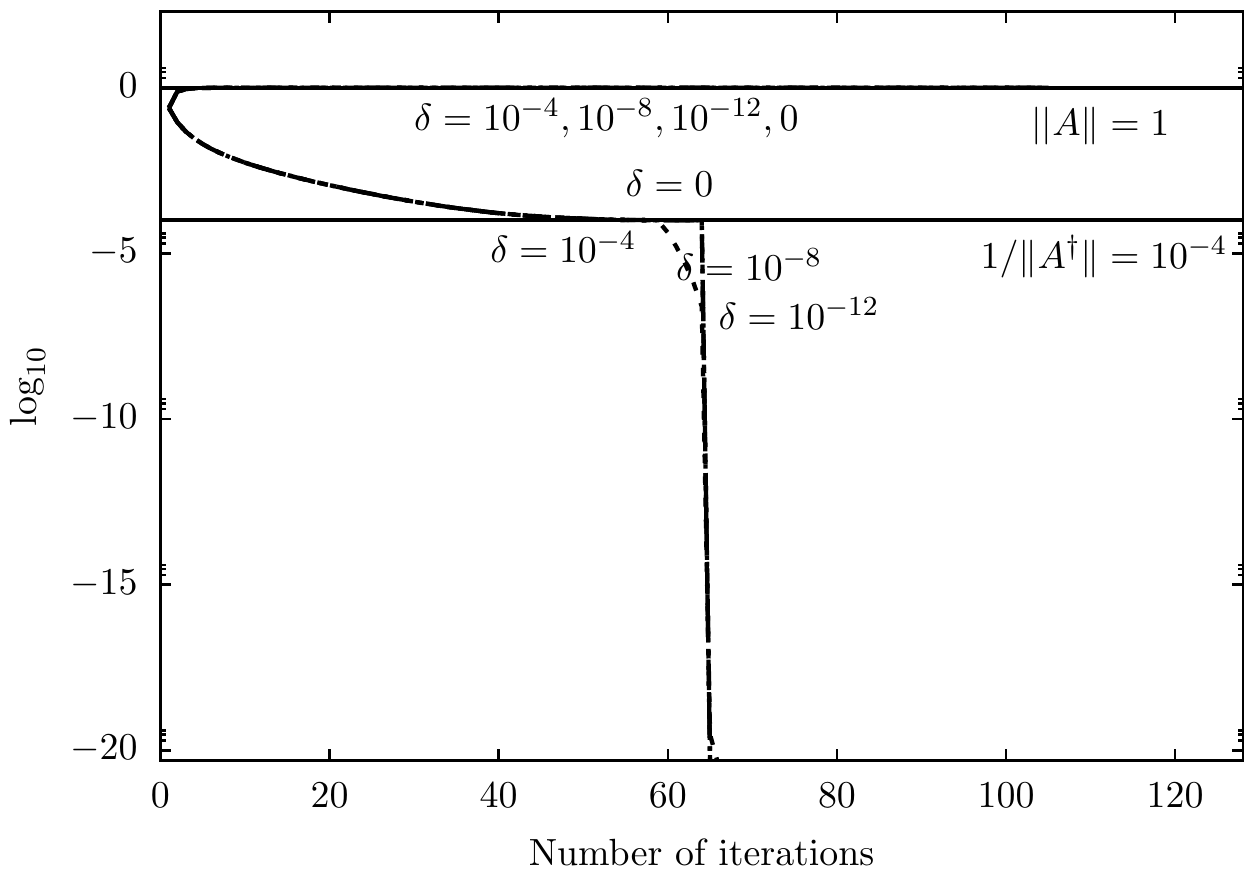}}
	\hspace{6pt}
	\subfloat[Strongly inconsistent cases ($\delta = 1$). \label{fig:sv_S}]{\includegraphics[width=180pt]{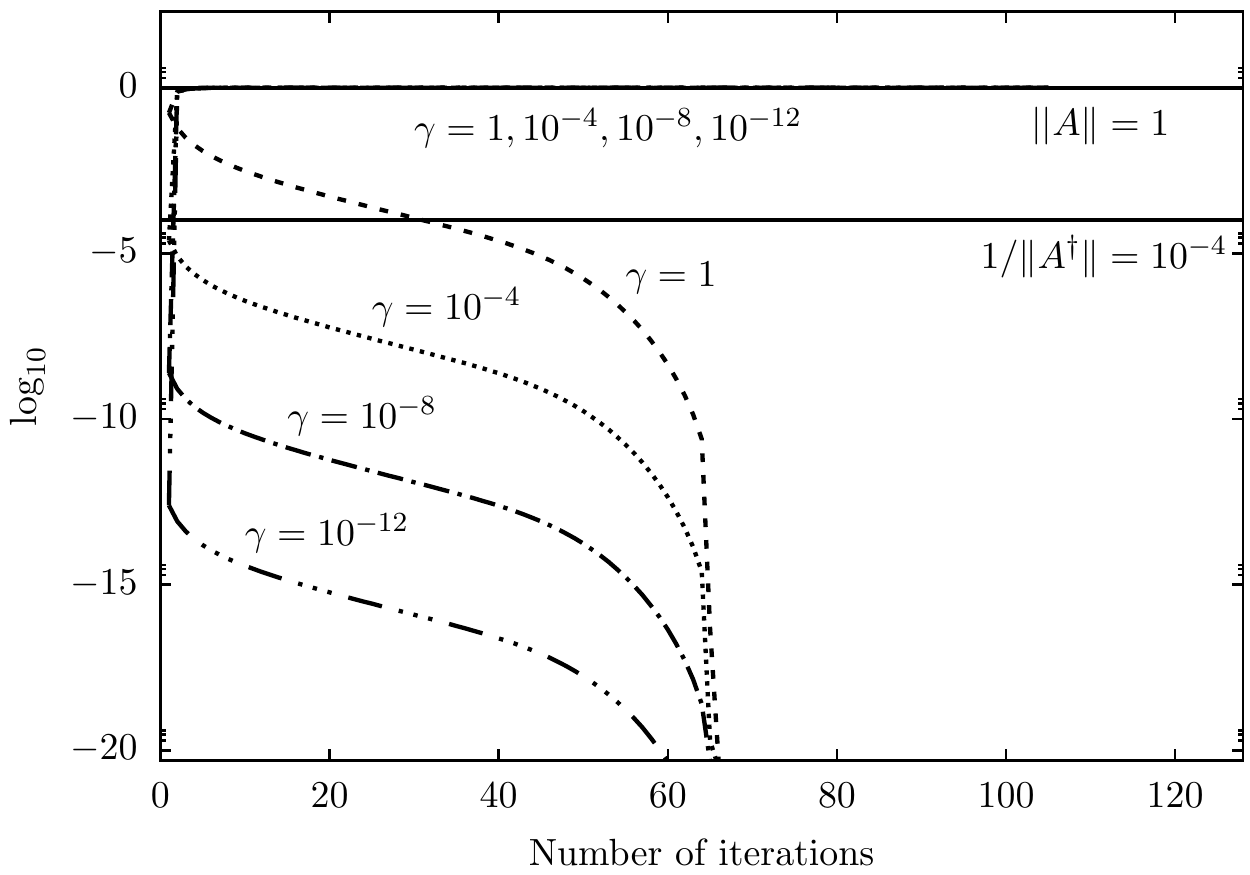}}
	\caption{Extremal singular values of $A$ and $H_{k+1, k}$ for GMRES.}
	\label{fig:sv}
\end{figure}
only by the condition number of $A$, even though the inconsistency increases or $\boldsymbol{r}_0$ approaches $\mathcal{N}(A)$.
Hence, for inconsistent problems  with EP matrices RR-GMRES is a successful alternative to GMRES.

\begin{figure}[ht!]
	\centering
	\subfloat[Weakly inconsistent cases ($\gamma = 1$). \label{fig:W_relres_RR}]{\includegraphics[width=180pt]{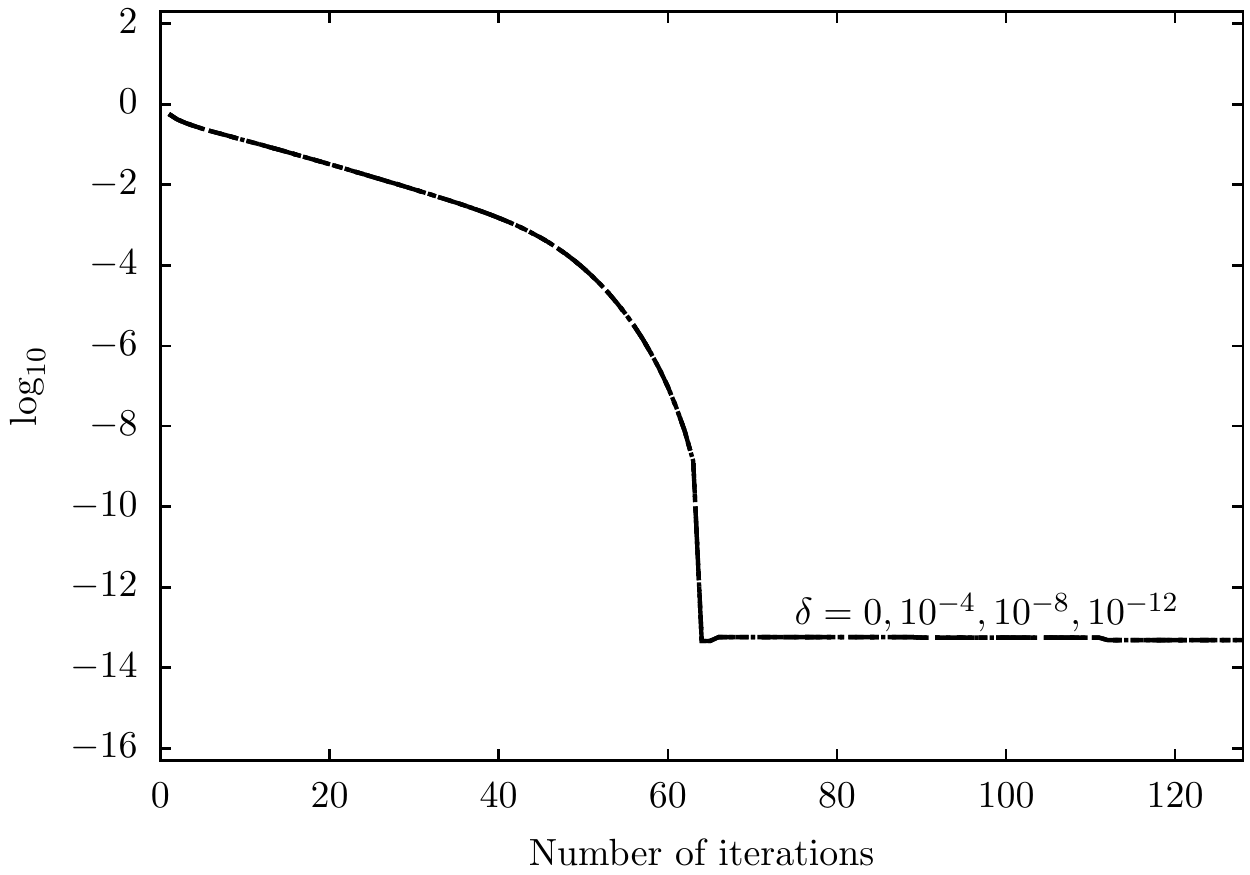}}
	\hspace{6pt}
	\subfloat[Strongly inconsistent cases ($\delta = 1$). \label{fig:S_relres_RR}]{\includegraphics[width=180pt]{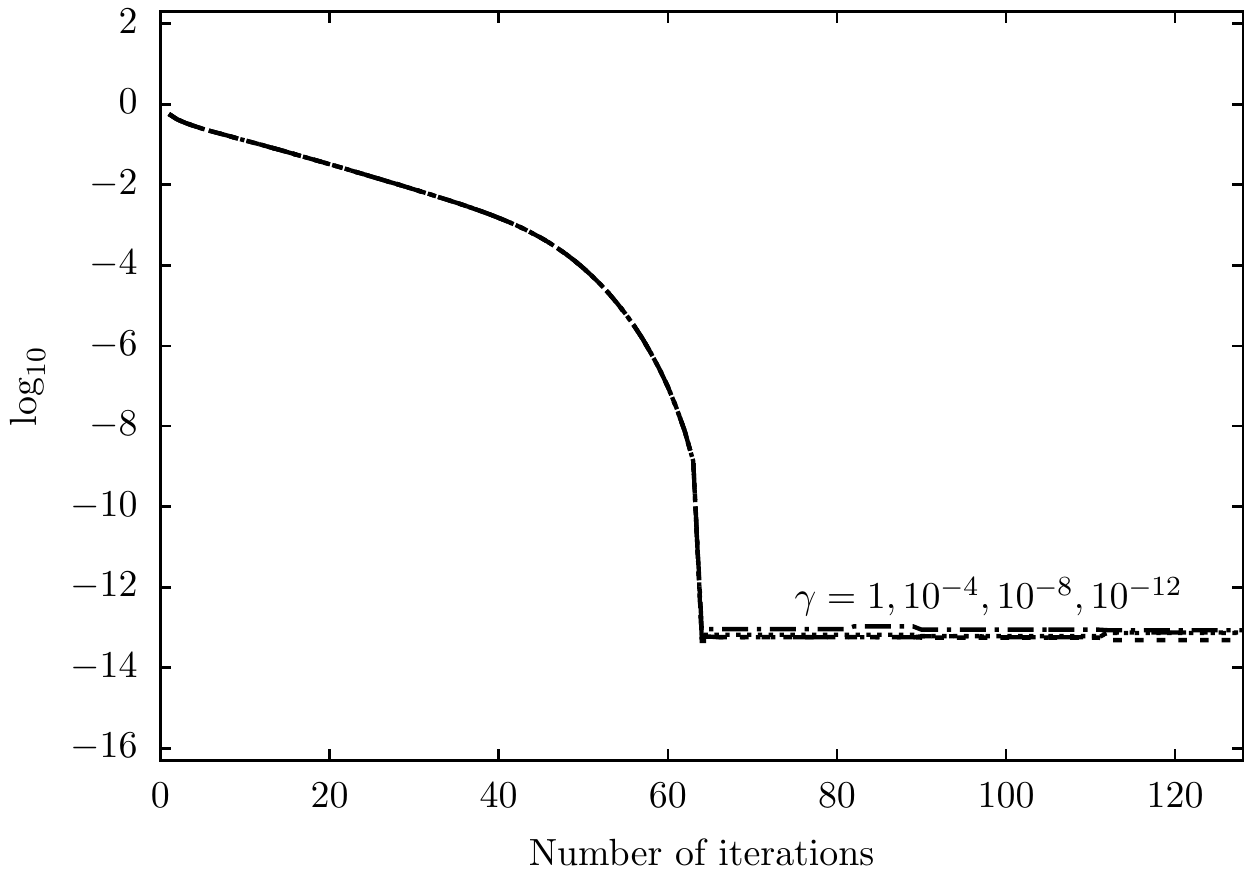}}
	\caption{Relative residual norm $\| A^\mathsf{T} \boldsymbol{r}_k^\mathrm{R} \| / \| A^\mathsf{T} \boldsymbol{b} \|$ for RR-GMRES.}
	\label{fig:RRrelres}
	\centering
	\subfloat[Weakly inconsistent cases ($\gamma = 1$). \label{fig:W_reserr_RR}]{\includegraphics[width=180pt]{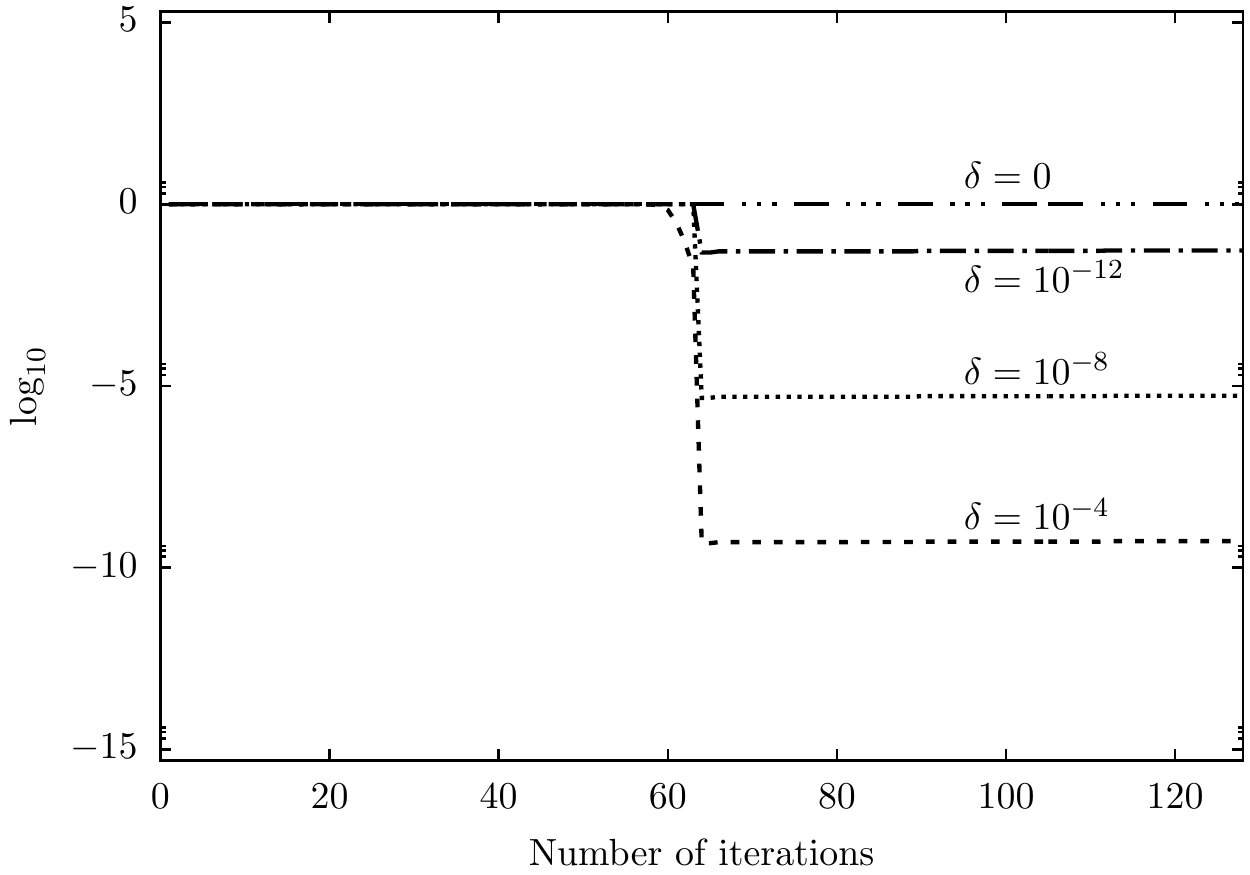}}
	\hspace{6pt}
	\subfloat[Strongly inconsistent cases ($\delta = 1$). \label{fig:S_reserr_RR}]{\includegraphics[width=180pt]{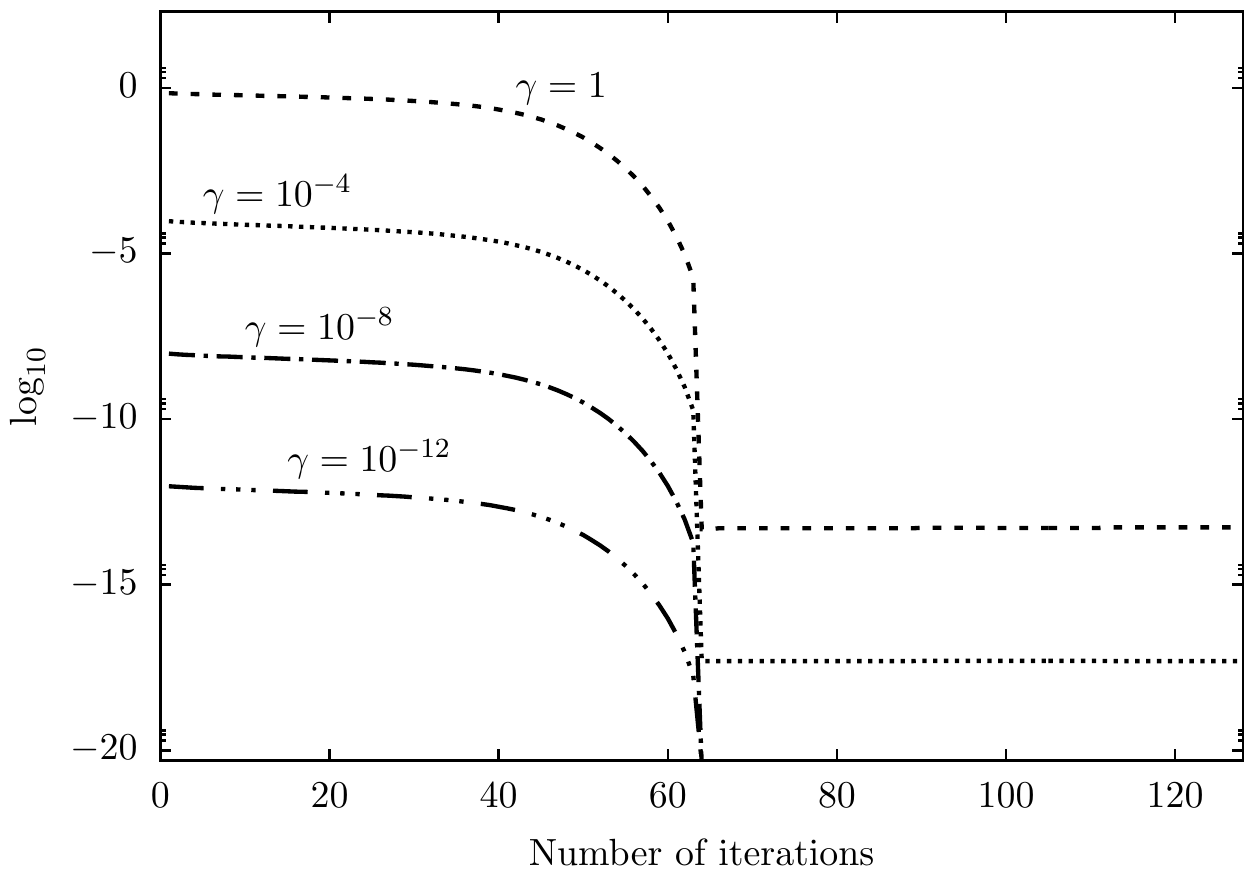}}
	\caption{Relative residual error norm $\| \boldsymbol{r}_k^\mathrm{R} - \boldsymbol{r}_* \| / \| \boldsymbol{r}_k^\mathrm{R} \|$ for RR-GMRES.}
	\label{fig:RRreserr}
	\subfloat[Weakly inconsistent cases ($\gamma = 1$).]{\includegraphics[width=180pt]{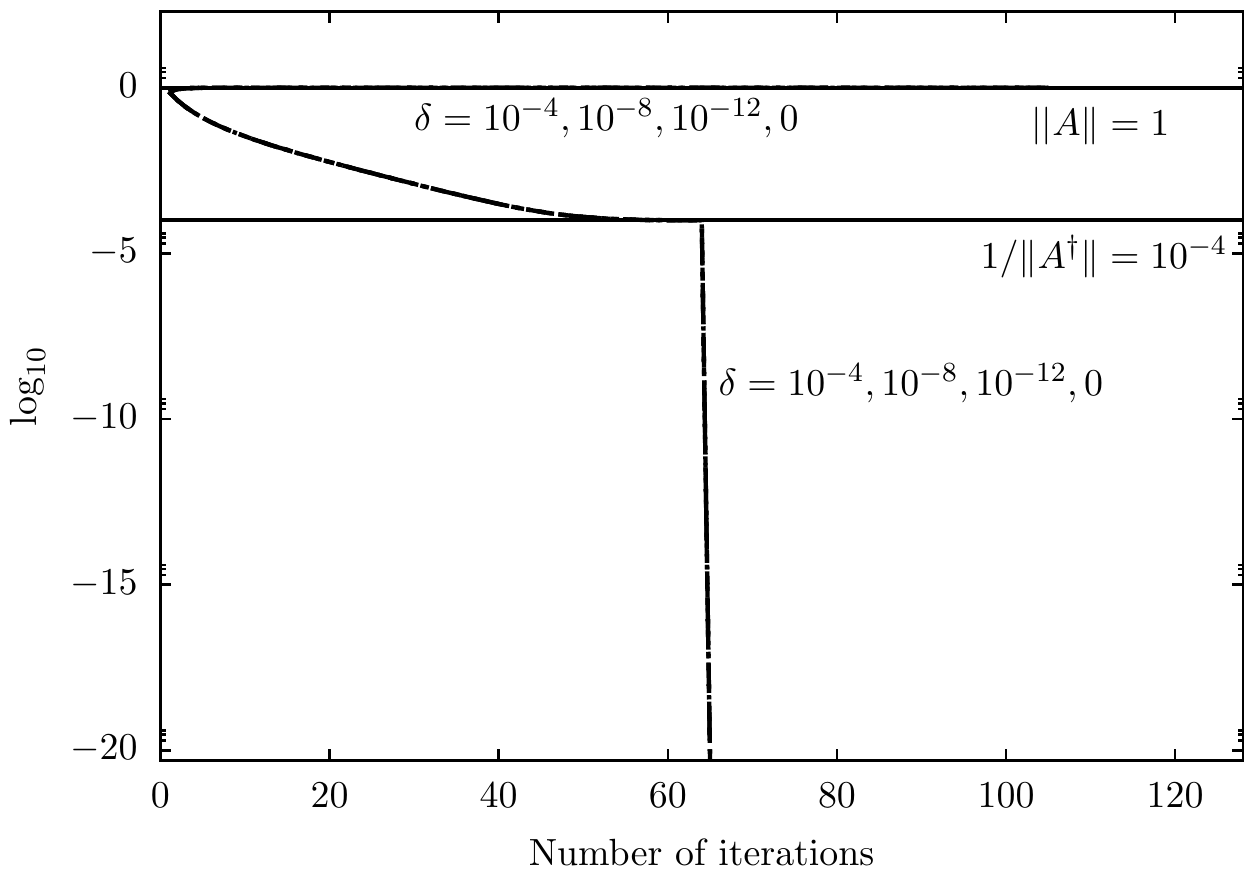}}
	\hspace{6pt}
	\subfloat[Strongly inconsistent cases ($\delta = 1$).]{\includegraphics[width=180pt]{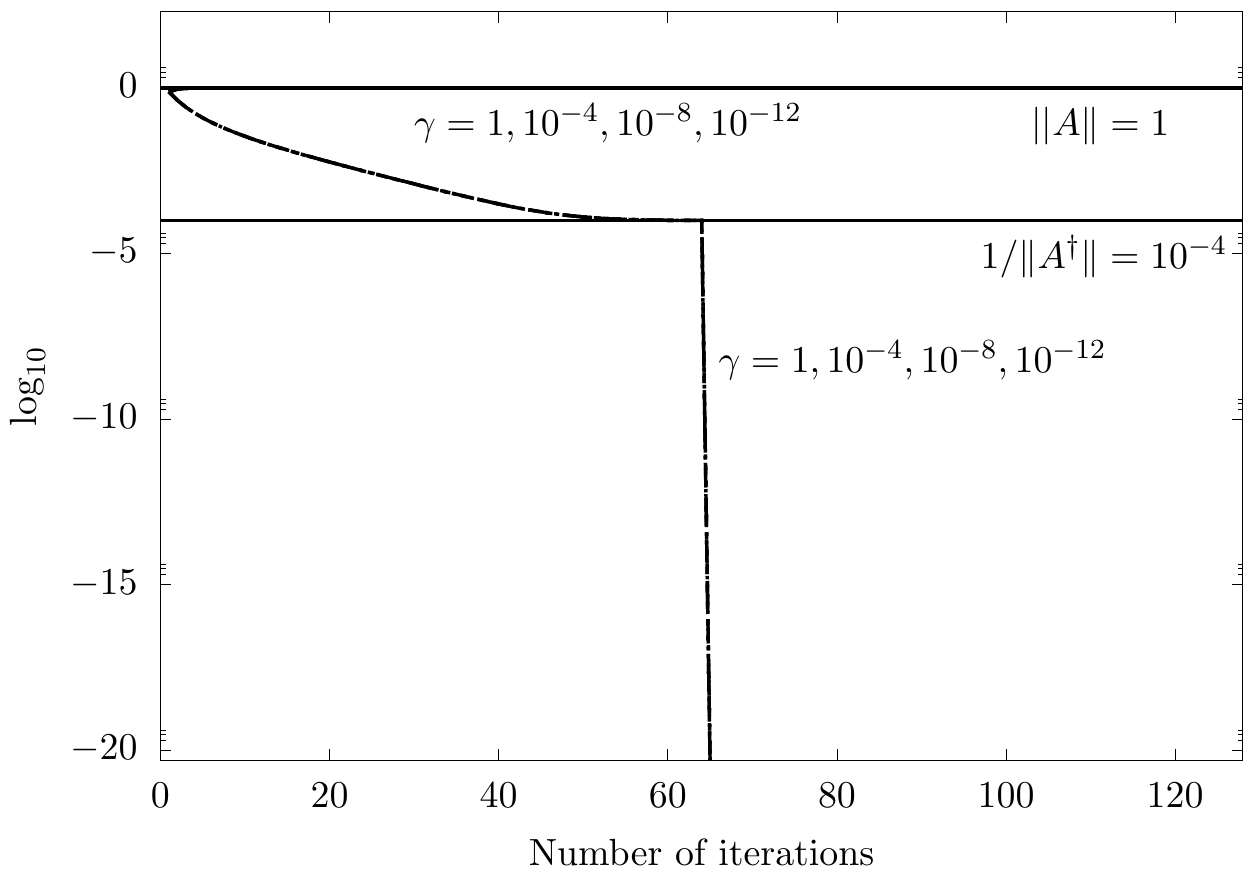}}
	\caption{Extremal singular values of $A$ and $H_{k+1, k}^\mathrm{R}$ for RR-GMRES.}
	\label{fig:RRsv}
\end{figure}

\section{GMRES and GP matrices}\label{sec:GMR_GP}

We have shown in \cref{sec:GMR_EP} that the condition number $\kappa(A\rvert_{\mathcal{R}(A)})$ plays an important role in the behavior of GMRES and in the EP case we have $\kappa(A\rvert_{\mathcal{R}(A)})= \kappa(A)$.
Thus for consistent problems with EP matrices, the condition number of $A$ represents an upper bound for the condition number of $H_{k+1, k}$ due to $\kappa(H_{k+1, k}) \leq \kappa(A\rvert_{\mathcal{R}(A)})=\kappa(A)$ and the accuracy of the GMRES iterates is actually determined by the singular values of $A$.
Consider now applying GMRES to $A \boldsymbol{x} = \boldsymbol{b}$, where $A$ is a GP matrix (\cref{th:GMRESconv_SS}).
We will show that in the GP case $\kappa(A\rvert_{\mathcal{R}(A)})$ can be significantly larger than $\kappa(A)$ and thus the condition number $\kappa(H_{k+1, k})$ can become larger than $\kappa(A)$ even in the consistent case. 
The accuracy of the GMRES iterates can be then affected by the inaccurate solution of the extended Hessenberg least squares problem that can be ill-conditioned even if $A$  is well-conditioned.

According to \cref{th:GMRESconv_SS}, GMRES in the consistent GP case determines $\boldsymbol{x}_\# + (\mathrm{I} - A^\# \! A) \boldsymbol{x}_0$.
The vector $\boldsymbol{x}_\#$ that belongs to $\mathcal{R}(A)$ can be related to the vector $\boldsymbol{x}_*$ that belongs to $\mathcal{R}(A^\mathsf{T})$ as follows:
\begin{align}
\sigma_r(V_1^\mathsf{T} U_1) \| \boldsymbol{x}_\# \| \leq \| \boldsymbol{x}_* \| \leq \| \boldsymbol{x}_\# \|,
\label{eq:ineq_xastxsharp}
\end{align}
which follows from the identity $\boldsymbol{x}_* = P_{\mathcal{R}(A^\mathsf{T})} \boldsymbol{x}_\# =  V_1 V_1^\mathsf{T} U_1 U_1^\mathsf{T} \boldsymbol{x}_\#$.
Note that $\boldsymbol{x}_\#$ may have a large component in $\mathcal{N}(A)$, if the angle between $\mathcal{N}(A)$ and $\mathcal{R}(A)$ is small, which may affect the accuracy of GMRES iterates (see \cref{fig:sol_GP}), due to the ill-conditioning of the extended Hessenberg matrix $H_{k+1, k}$.

In the consistent case, the extremal singular values of $H_{k+1,k}$ can be bounded as
\begin{align*}
\sigma_1(H_{k+1,k}) \leq \max_{\boldsymbol{z} \in \mathcal{R}(A) \backslash \lbrace \boldsymbol{0} \rbrace} \frac{\| A \boldsymbol{z} \|}{\| \boldsymbol{z} \|} = \max_{\boldsymbol{z} \in \mathbb{R}^r \backslash \lbrace \boldsymbol{0} \rbrace} \frac{\| U_1 \Sigma_r V_1^\mathsf{T} U_1 \boldsymbol{z} \|}{\| U_1 \boldsymbol{z} \|} 	\leq \| A \| \| V_1^\mathsf{T} U_1 \|,
\end{align*}
\begin{align}
\sigma_k(H_{k+1,k}) \geq \min_{\boldsymbol{z} \in \mathcal{R}(A) \backslash \lbrace \boldsymbol{0} \rbrace} \frac{\| A \boldsymbol{z} \|}{\| \boldsymbol{z} \|} = \min_{\boldsymbol{z} \in \mathbb{R}^r \backslash \lbrace \boldsymbol{0} \rbrace} \frac{\| U_1 \Sigma_r V_1^\mathsf{T} U_1 \boldsymbol{z} \|}{\| U_1 \boldsymbol{z} \|} \geq \sigma_r (A) \sigma_r (V_1^\mathsf{T} U_1).
\label{eq:lbHconGP}
\end{align}
Consequently, $\kappa(H_{k+1,k}) \leq \kappa(A)  \kappa (V_1^\mathsf{T} U_1)$ is related to the extremal principal angles between $\mathcal{R}(A)$ and $\mathcal{R}(A^\mathsf{T})$ (cf.\ \cite[Theorem~2.1]{Wei2000}).
The lower bound \eqref{eq:lbHconGP} shows that, in the consistent case, the smallest singular value of $H_{k+1, k}$ can be smaller than the smallest nonzero singular value of $A$, depending on the smallest nonzero singular value  of $V_1^\mathsf{T} U_1$.
In addition, it is easy to see that $\sigma_k (H_{k+1,k}) $ can be bounded by
\begin{align}
\sigma_k (H_{k+1, k}) = \min_{\boldsymbol{z} \in \mathcal{K}_k \backslash \lbrace \boldsymbol{0} \rbrace} \frac{\| A \boldsymbol{z} \|}{\| \boldsymbol{z} \|} \leq \frac{\| A \boldsymbol{r}_0 \|}{\| \boldsymbol{r}_0 \|} \leq \frac{\| A \| \| \boldsymbol{r}_0 \rvert_{\mathcal{R}(A^\mathsf{T})} \|}{\| \boldsymbol{r}_0 \|}.
\label{eq:uppbound_sgmk_H}
\end{align}
Here, the last inequality is implied by the splitting $A \boldsymbol{r}_0 = A (\boldsymbol{r}_0 \rvert_{\mathcal{N}(A)} + \boldsymbol{r}_0 \rvert_{\mathcal{R}(A^\mathsf{T})}) = A \boldsymbol{r}_0 \rvert_{\mathcal{R}(A^\mathsf{T})}$.
Although Brown and Walker mention in \cite[p.~50]{BrownWalker1997} that the condition number of $A \rvert_{\mathcal{K}_k}$ cannot become arbitrarily large through an unfortunate 
\begin{figure}[b]
	\centering
	\includegraphics[width=160pt]{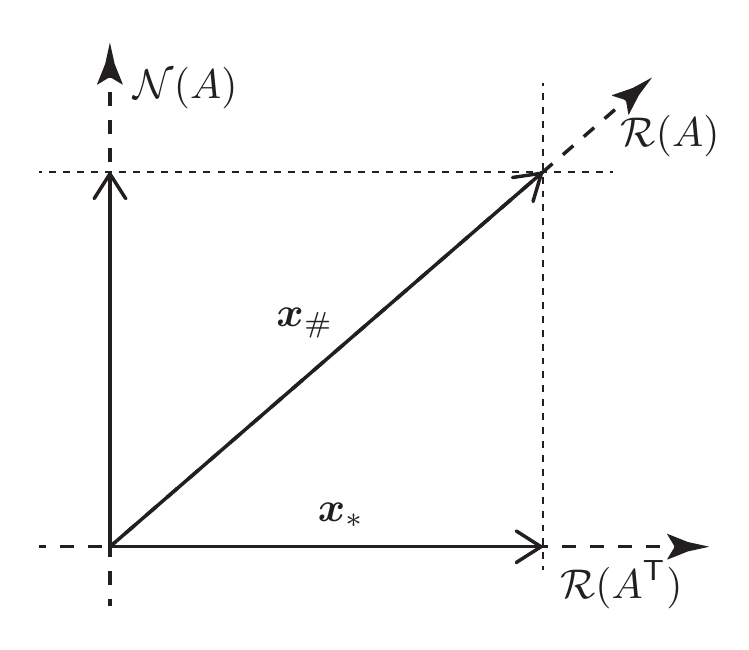}
	\caption{Geometric illustration of solution vectors in the GP case.}
	\label{fig:sol_GP}
\end{figure}
choice of $\boldsymbol{b}$ and $\boldsymbol{x}_0$, it is clear from \cref{eq:uppbound_sgmk_H} that if the residual $\boldsymbol{r}_0$ has a very small component in $\mathcal{R}(A^\mathsf{T})$ then the condition number of $H_{k+1,k}$ can be large for any singular matrix $A$.

In the following, we give illustrative examples that lead to ill-conditioned extended Hessenberg matrix $H_{k+1, k}$ in  GMRES.
First, we consider GMRES with $\boldsymbol{x}_0 = \boldsymbol{0}$ applied to $A \boldsymbol{x} = \boldsymbol{b}$, where
\begin{align}
A =
\begin{bmatrix}
\varepsilon & 1 \\
0 & 0
\end{bmatrix}, \quad
A^\# =
\begin{bmatrix}
1/\varepsilon & 1/\varepsilon^2 \\
0 & 0
\end{bmatrix}, \quad
\boldsymbol{b} =
\begin{bmatrix}
1 \\
0
\end{bmatrix}.
\label{eq:2x2GP}
\end{align}
The matrix $A$ has the following ranges and nullspaces
\begin{align*}
\mathcal{R}(A) =
\mathrm{span}
\left\lbrace
\begin{bmatrix}
1 \\
0
\end{bmatrix} \right\rbrace, \quad
\mathcal{N}(A) =
\mathrm{span}
\left\lbrace
\begin{bmatrix}
1 \\
- \varepsilon
\end{bmatrix}
\right\rbrace, \\
\mathcal{R}(A^\mathsf{T}) =
\mathrm{span}
\left\lbrace
\begin{bmatrix}
\varepsilon \\
1
\end{bmatrix}
\right\rbrace, \quad
\mathcal{N}(A^\mathsf{T}) =
\mathrm{span}
\left\lbrace
\begin{bmatrix}
0 \\
1
\end{bmatrix}
\right\rbrace.
\end{align*}
It is clear that for $\varepsilon=0$ the matrix $A$ is DR (see \cref{sec:GMRsing}).
In addition, the initial vector $\boldsymbol{r}_0$ satisfies $\boldsymbol{r}_0 \in \mathcal{N}(A) \cap \mathcal{R}(A)$ and thus the GMRES method breaks down at step 1.
Now suppose that $0 < \varepsilon \ll 1$.
Then, the matrix $A$ is GP but not EP, i.e., $\mathcal{R}(A^\mathsf{T}) \ne\mathcal{R}(A)$. 
Since $\sigma_1 (V_1^\mathsf{T} U_1)= \varepsilon / \sqrt{1 + \varepsilon^2}$, we have 
\begin{align*}
\min_{\boldsymbol{z} \in \mathcal{R}(A) \backslash \lbrace \boldsymbol{0} \rbrace} \frac{\| A \boldsymbol{z} \|}{\| \boldsymbol{z} \|} = \varepsilon, \quad
\min_{\boldsymbol{z} \in \mathcal{R}(A^\mathsf{T}) \backslash \lbrace \boldsymbol{0} \rbrace} \frac{\| A \boldsymbol{z} \|}{\| \boldsymbol{z} \|} = \sqrt{1 + \varepsilon^2}.
\end{align*}
The smallest singular value of $H_{2, 1}$ is  significantly smaller than the smallest nonzero singular value of $A$, $\sigma_1 (H_{2, 1}) = \varepsilon \ll \sqrt{1 + \varepsilon^2} = \sigma_1 (A)$.
Indeed, the components of $H_{2, 1}$ are $H_{2, 1} = [\varepsilon, 0]^\mathsf{T}$.
Furthermore, by solving $\min_{\boldsymbol{y} \in \mathbb{R}^1} \| \beta \boldsymbol{e}_1 - H_{2, 1} \boldsymbol{y} \|$ with $\beta = 1$,
we have $\boldsymbol{y}_1 = 1 / \varepsilon$, i.e., $\boldsymbol{y}_1$ has a large component.
We see that $\boldsymbol{x}_1 = Q_1 \boldsymbol{y}_1$  and  $\boldsymbol{x}_1 = \boldsymbol{x}_\# = A^\# \boldsymbol{b} = [1/\varepsilon, 0]^\mathsf{T}$ for $\boldsymbol{b} = [1, 0]^\mathsf{T}$.
Therefore, $\|  \boldsymbol{x}_1 \|$ becomes very large even if the condition number of $A$ and the norm of the right-hand side are small.
Thus, the vector $\boldsymbol{x}_\#$ contains a large component in $\mathcal{N}(A)$, whereas $\boldsymbol{x}_* = 1 / (1+\varepsilon^2) [\varepsilon, 1]^\mathsf{T}$, and the inequalities \eqref{eq:ineq_xastxsharp} are satisfied.


In the following, we also consider the nonsingular ill-posed linear system $A \boldsymbol{x} = \boldsymbol{b}$, where 
\begin{align}
A = 
\begin{bmatrix}
\varepsilon & 1 \\
0 & \delta
\end{bmatrix},
\quad
\boldsymbol{b} =
\begin{bmatrix}
1 \\
-\varepsilon
\end{bmatrix}
\quad \delta, \varepsilon > 0,
\label{eq:2x2nearGP}
\end{align}
where $\delta$ is a small parameter and the right-hand side $\boldsymbol{b}$ is contaminated by the error $[0, -\varepsilon]^\mathsf{T}$.
The extremal singular values of $A$ are $\sigma_1 (A) \simeq 1$ and $\sigma_2 (A) \simeq \delta \varepsilon$.
Indeed, the condition number of $A$ is bounded as 
\begin{align}
\kappa (A) = & \left( \frac{1 + \delta^2 + \varepsilon^2 + \sqrt{(1 + \delta^2 + \varepsilon^2)^2 - 4 \delta^2 \varepsilon^2}}{1 + \delta^2 + \varepsilon^2 - \sqrt{(1 + \delta^2 + \varepsilon^2)^2 - 4 \delta^2 \varepsilon^2}} \right)^{1/2} \geq \frac{1}{\delta \varepsilon}
\end{align}
Then, GMRES applied to $A \boldsymbol{x} = \boldsymbol{b}$ with the error-free right-hand side $\boldsymbol{b} = [1, 0]^\mathsf{T}$ and $\boldsymbol{x}_0 = \boldsymbol{0}$ gives the same Arnoldi vector $\boldsymbol{q}_1$ and the same extended Hessenberg matrix $H_{2, 1}$ as the ones for \cref{eq:2x2GP}.
GMRES applied to $A \boldsymbol{x} = \boldsymbol{b}$ with \eqref{eq:2x2nearGP} give 
\begin{align*}
Q_2 = \frac{1}{\sqrt{1+\varepsilon^2}} 
\begin{bmatrix}
1 & \varepsilon \\
-\varepsilon & 1
\end{bmatrix},
\quad 
H_{2, 2} = \frac{1}{1+\varepsilon^2}
\begin{bmatrix}
\delta \varepsilon^2 & 1 + \varepsilon^2 - \delta \varepsilon\\
\delta \varepsilon & \varepsilon (1+\varepsilon^2) + \delta
\end{bmatrix}.
\end{align*}
Hence, we have the upper bound $\| A^{-1} \boldsymbol{b} \| = \| \boldsymbol{x}_2 \|=\| \boldsymbol{y}_2 \| \leq \| A^{-1} \|  \| \boldsymbol{b} \| = \| {H_{2, 2}}^{-1} \| \| \boldsymbol{r}_0 \| = \break \sqrt{1+\varepsilon^2} / (\delta \epsilon)$.
This means that GMRES breaks down in the exact singular case, whereas, in the ill-posed case, GMRES does not break down but the iterates computed by GMRES can be inaccurate as $\delta$, $\epsilon$ go to zero.
In both exactly singular case and nonsingular ill-posed case, the iterates computed by GMRES will be inaccurate due to the ill-conditioning of $H_{2, 2}$.

The above behavior of GMRES for singular linear systems with respect to the condition number of $V_1^\mathsf{T} U_1$ is illustrated on numerical examples $A \boldsymbol{x} = \boldsymbol{b}$, where
\begin{align}
A =
\begin{bmatrix}
D & \mathrm{I} \\
\mathrm{O} & \mathrm{O}
\end{bmatrix} \in \mathbb{R}^{128 \times 128}, \quad \boldsymbol{b} = \begin{bmatrix} \boldsymbol{f} \\ \boldsymbol{0}  \end{bmatrix},
\label{eq:DI}
\end{align}
$D = \mathrm{diag} (d_1, d_2, \dots, d_{64}) \in \mathbb{R}^{64 \times 64}$ is a diagonal matrix whose diagonal entries have the so-called Strako{\v s} distribution \cite{Strakos1991} 
\begin{align*}
d_1 = 1, \quad d_{64} = 10^{-\rho}, \quad d_i = d_{64} + \frac{64-i}{63}(d_1 - d_{64}) \cdot 0.7^{i-1}, \quad i = 2, 3, \dots, 63,
\end{align*}
and $\boldsymbol{f} = (f_i) \in \mathbb{R}^{64}$ has the entries $f_i = 10^{-(64 - i) \rho / 63}$, $j = 1, 2, \dots, 64$.
Note that $A$ is GP but not EP.
This setting gives well-conditioned $A$ with $\kappa (A) = \sqrt{2 / (10^{-2 \rho} + 1)} \simeq \sqrt{2}$ and ill-conditioned $V_1^\mathsf{T} U_1$ for $\kappa (V_1 ^\mathsf{T}U_1) = 10^\rho \sqrt{(10^{-2 \rho} + 1) / 2} \simeq 10^\rho / \sqrt{2}$ for $\rho \gg 1$.
Furthermore, the norms of vectors $\boldsymbol{f}$, $D \boldsymbol{f}$, $D^2 \boldsymbol{f}$ decrease and this reduction is pronounced as the value of $\rho$ increases.

\Cref{fig:DI_nrm_r_minsv_b=A[0d],fig:DI_RRnrm_r_minsv_b=A[0d]} show the relative residual norm and the smallest singular value of $H_{k+1, k}$ and $H_{k+1, k}^\mathrm{R}$ versus the number of iterations for GMRES and RR-GMRES, respectively, applied to the above linear systems with $\rho = 1$, $4$, $8$, and $12$.
As the value of $\rho$ increases, the condition number of the extended Hessenberg matrix increases, and the accuracy of the relative residual for both GMRES and RR-GMRES is significantly lost.
It is clear from our experiments that while RR-GMRES does help in the inconsistent EP case by starting with a vector in $\mathcal{R}(A)$ to construct the Krylov subspace, in the GP case both GMRES and RR-GMRES may not give accurate solutions, even in the consistent case, when the condition number of $V_1^\mathsf{T} U_1$ is large.
\begin{figure}[b!]
	\centering
	\subfloat[Relative residual norm $\| \boldsymbol{r}_k \| / \| \boldsymbol{b} \|$.]{\includegraphics[width=180pt]{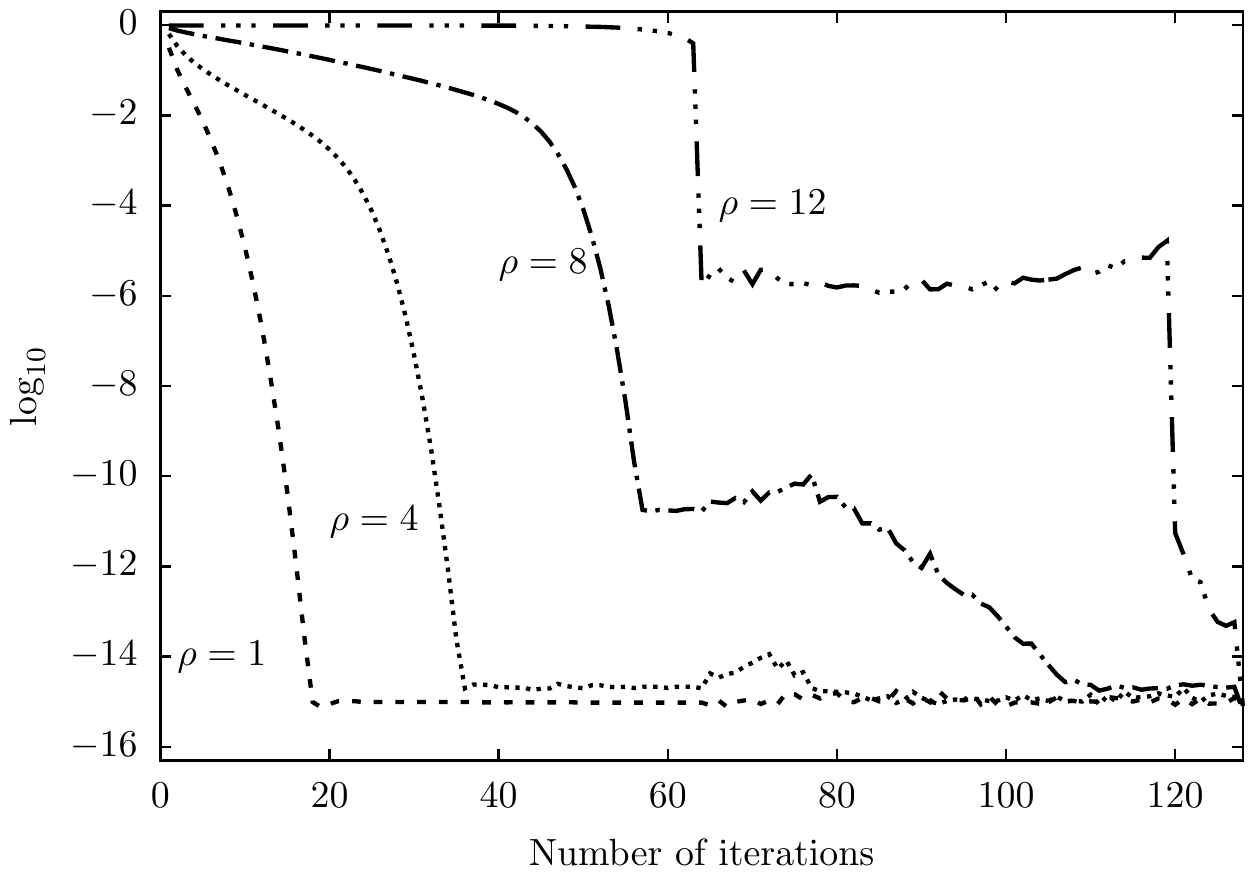}}
	\hspace{6pt}
	\subfloat[Smallest singular value of $H_{k+1, k}$.]{\includegraphics[width=180pt]{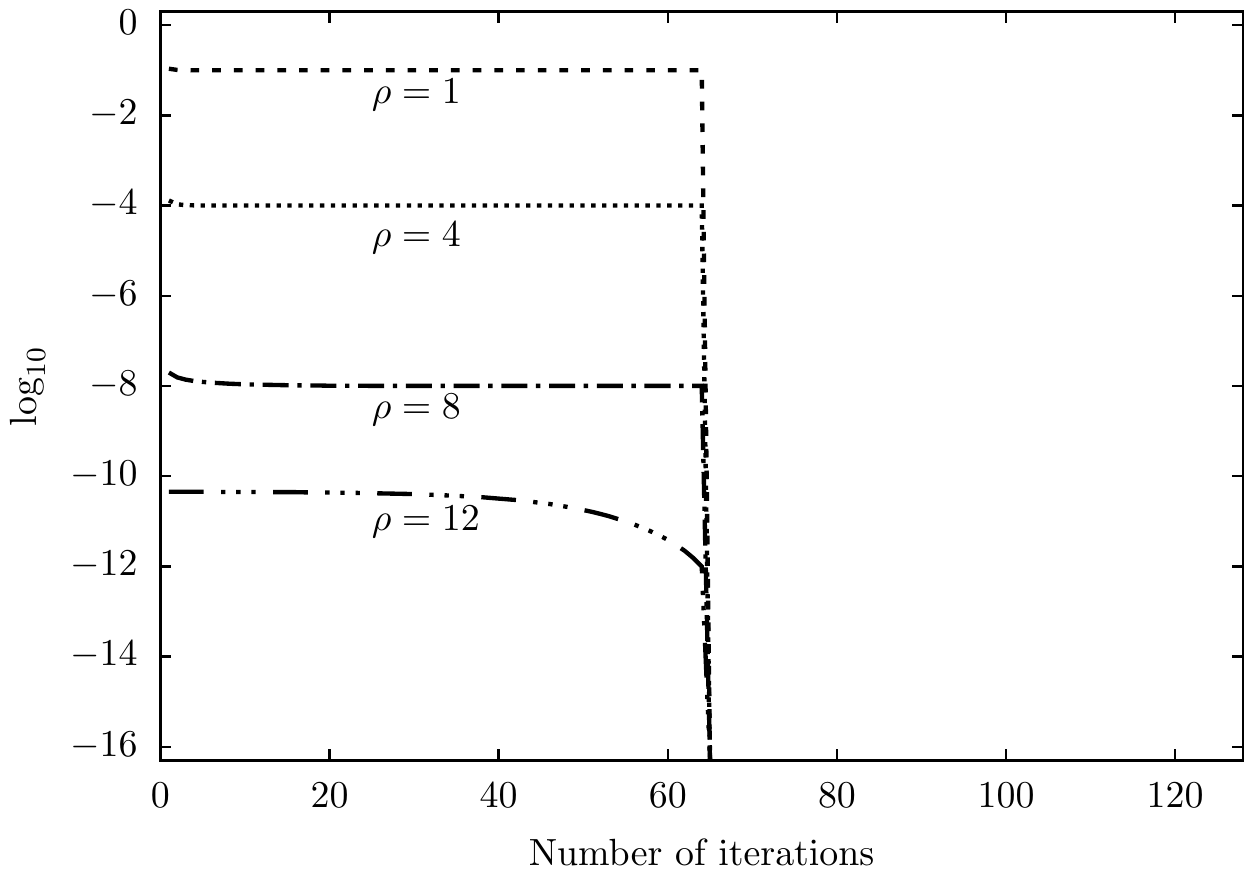}}
	\caption{GMRES on \eqref{eq:DI} with $\boldsymbol{b} = [\boldsymbol{f}^\mathsf{T}, \boldsymbol{0}^\mathsf{T}]^\mathsf{T}$ and different $\kappa (V_1^\mathsf{T} U_1) \simeq 10^\rho / \sqrt{2}$.}
	\label{fig:DI_nrm_r_minsv_b=A[0d]}
	\centering
	\subfloat[Relative residual norm $\| \boldsymbol{r}_k^{\mathrm{R}} \| / \| \boldsymbol{b} \|$.]{\includegraphics[width=180pt]{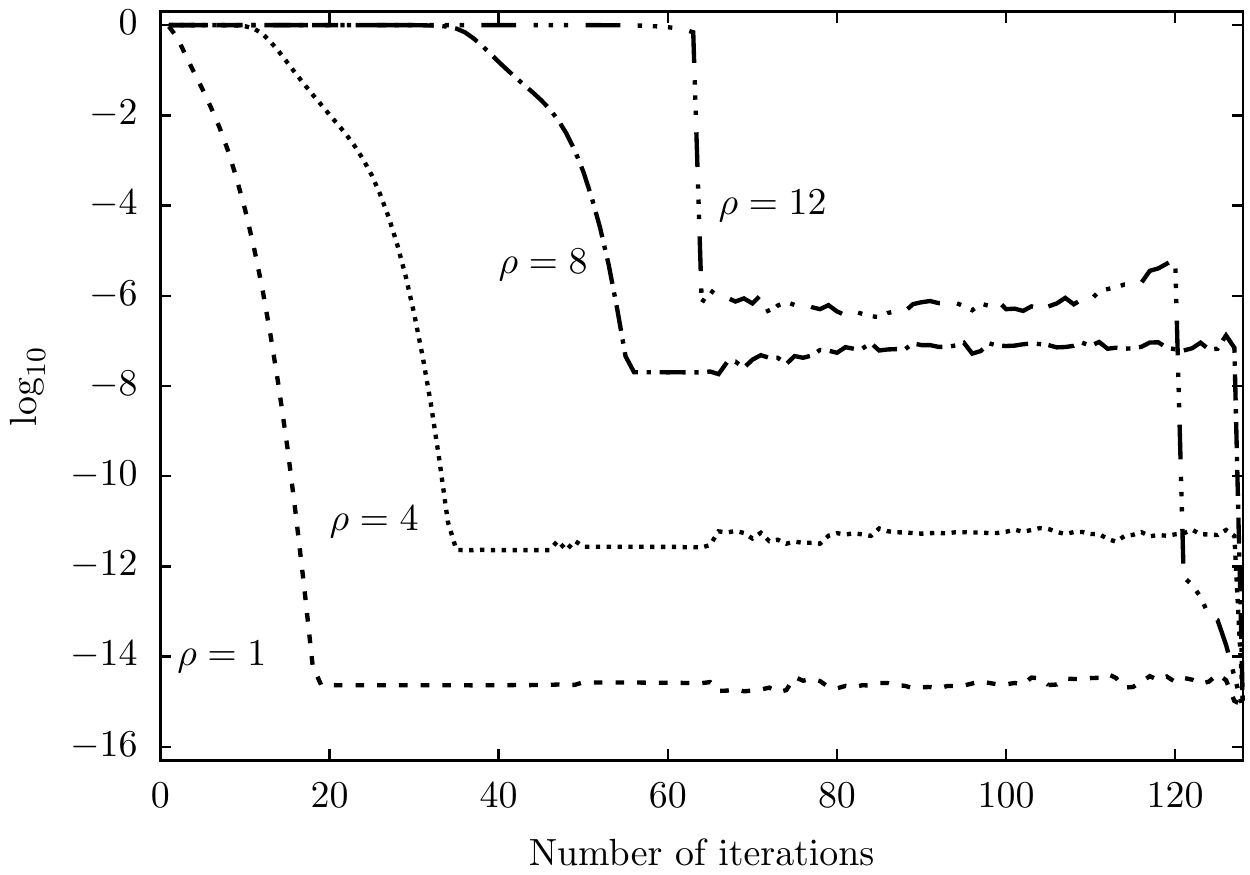}}
	\hspace{6pt}	
	\subfloat[Smallest singular value of $H_{k+1, k}^\mathrm{R}$.]{\includegraphics[width=180pt]{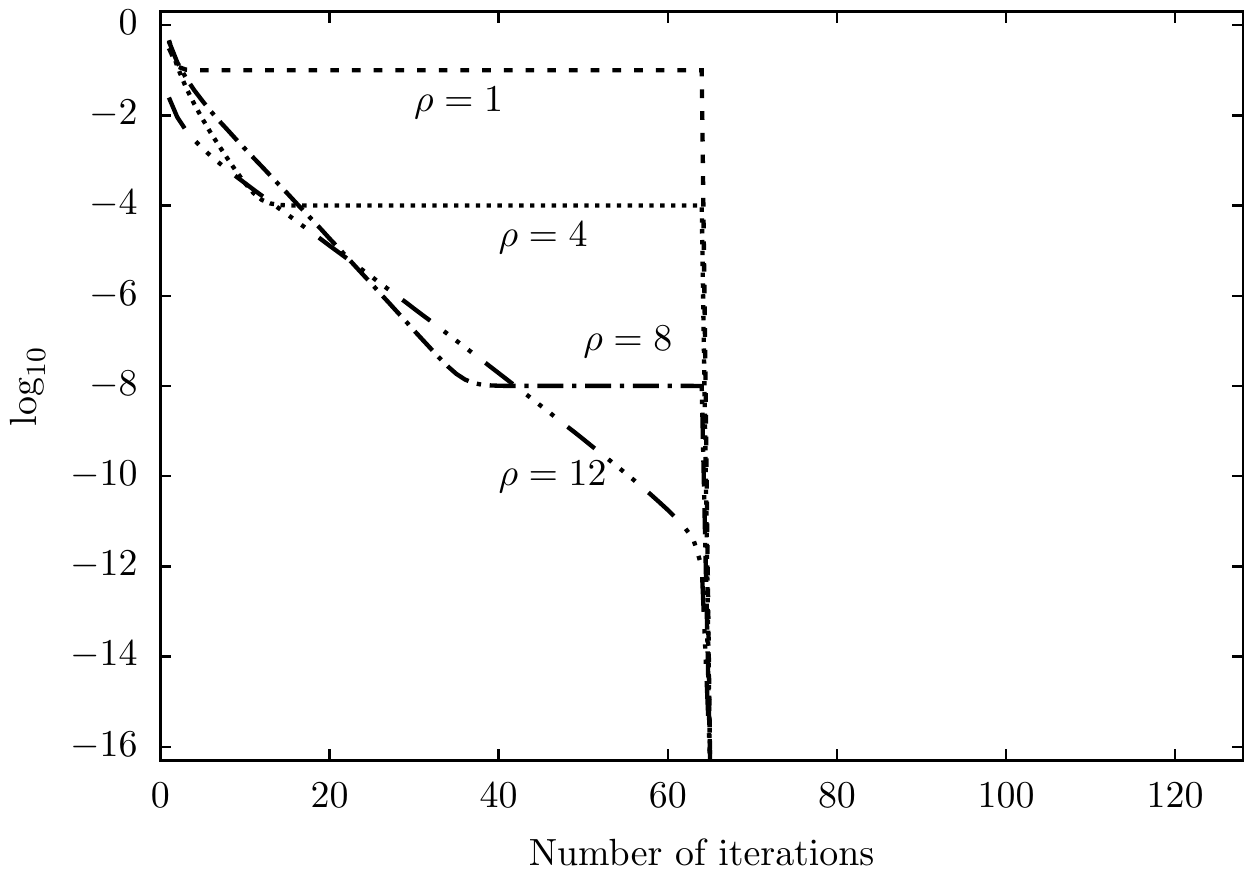}}	
	\caption{RR-GMRES on \eqref{eq:DI} with $\boldsymbol{b} = [\boldsymbol{f}^\mathsf{T}, \boldsymbol{0}^\mathsf{T}]^\mathsf{T}$ and different $\kappa (V_1^\mathsf{T} U_1) \simeq 10^\rho / \sqrt{2}$.}
	\label{fig:DI_RRnrm_r_minsv_b=A[0d]}
\end{figure}

\section{Conclusions} \label{sec:conc}
In this paper we have considered the behavior of the GMRES method for solving a linear system $A \boldsymbol{x} = \boldsymbol{b}$, where $A$ is singular.
We have discussed two classes of singular matrices (EP and GP) satisfying the conditions under which GMRES converges to a least squares solution and to the group inverse solution, respectively.
We have distinguished between the consistent and inconsistent cases and showed that the conditioning of the extended Hessenberg least squares problem can significantly affect the accuracy of approximate solutions computed by GMRES in finite precision arithmetic.

It appears that the consistent EP case is similar to the nonsingular case.
The rank deficiency of the extended Hessenberg least squares problem does not occur and GMRES converges to the accurate approximate solution and terminates with a degeneracy of the Krylov space in the next step.
If the coefficient matrix is EP, but system $A \boldsymbol{x} = \boldsymbol{b}$ is inconsistent, then despite the theoretical guarantee for convergence to the least squares solution, the extended Hessenberg least squares problem becomes seriously ill-conditioned and this may lead to very inaccurate approximate solutions computed by GMRES.
This happens when the distance of the initial residual to the nullspace is too small or when the residual vector converges gradually to the least squares residual.
For such cases, a remedy is to use RR-GMRES.

It is known that if the coefficient matrix is GP and the system is consistent, then theoretically GMRES converges to the solution given by the group inverse of $A$.
We have shown, however, that the extended Hessenberg least squares problem can be ill-conditioned  even in the consistent case.
Indeed, the conditioning of the extended Hessenberg matrix $H_{k+1,k}$ in GMRES depends not only on the conditioning of the coefficient matrix $A$ (as it is in the case of consistent EP problems) but also on the smallest principal angle between the spaces $\mathcal{R}(A^\mathsf{T})$ and $\mathcal{R}(A)$ that can be quite large. In such cases, both GMRES and RR-GMRES may compute inaccurate approximate solutions. 

We believe that under conditions guaranteeing the convergence of GMRES to the generalized least squares solution considered in \cite{WeiWu2000}, our results
can be extended to singular systems with a general $\mathrm{index}(A)$.
Note also that in this paper we assume only exact arithmetic and our considerations form a groundwork for  future work on rounding error analysis.

We would like to point out that here we have considered the behavior of GMRES applied to exactly singular problems.
The extension of our results to GMRES applied to almost singular (or numerically singular) linear systems is far from straightforward as it was also illustrated by our small examples.

\section*{Acknowledgments}
We would like to thank the referees for their valuable comments.

\bibliographystyle{siam}
\bibliography{ref}

\begin{thebibliography}{10}

\bibitem{AwonoTagoudjeu2010}
{\sc O.~Awono and J.~Tagoudjeu}, {\em A preconditioned minimal residual solver
  for a class of linear operator equations}, Comput. Methods Appl. Math., 10
  (2010), pp.~119--136.

\bibitem{BaksalaryTrenkler2008}
{\sc O.~M. Baksalary and G.~Trenkler}, {\em Characterizations of {EP}, normal,
  and {H}ermitian matrices}, Linear and Multilinear Algebra, 56 (2008),
  pp.~299--304.

\bibitem{BaksalaryTrenkler2011}
\leavevmode\vrule height 2pt depth -1.6pt width 23pt, {\em On disjoint range
  matrices}, Linear Algebra Appl., 435 (2011), pp.~1222--1240.

\bibitem{Ben-IsraelGreville2003}
{\sc A.~Ben-Israel and T.~N.~E. Greville}, {\em Generalized {I}nverses:
  {T}heory and {A}pplications}, Springer-Verlag, New York, 2nd~ed., 2003.

\bibitem{Bjorck1996}
{\sc {\AA}.~Bj{\"o}rck}, {\em {Numerical Methods for Least Squares Problems}},
  SIAM, Philadelphia, 1996.

\bibitem{BrownWalker1997}
{\sc P.~N. Brown and H.~F. Walker}, {\em {GMRES} on (nearly) singular systems},
  SIAM J. Matrix Anal. Appl., 18 (1997), pp.~37--51.

\bibitem{CalvettiLewisReichel2000}
{\sc D.~Calvetti, B.~Lewis, and L.~Reichel}, {\em {GMRES}-type methods for
  inconsistent systems}, Linear Algebra Appl., 316 (2000), pp.~157--169.

\bibitem{CalvettiLewisReichel2002BIT}
\leavevmode\vrule height 2pt depth -1.6pt width 23pt, {\em {GMRES}, {L}-curves,
  and discrete ill-posed problems}, BIT, 42 (2002), pp.~44--65.

\bibitem{CalvettiLewisReichel2002NumerMath}
\leavevmode\vrule height 2pt depth -1.6pt width 23pt, {\em On the regularizing
  properties of the {GMRES} method}, Numer. Math., 91 (2002), pp.~605--625.

\bibitem{CampbellMeyer2009}
{\sc S.~L. Campbell and C.~D. Meyer}, {\em {G}eneralized {I}nverses of {L}inear
  {T}ransformations}, SIAM, Philadelphia, 2009.

\bibitem{CaoWang2002}
{\sc Z.-H. Cao and M.~Wang}, {\em A note on {K}rylov subspace methods for
  singular systems}, Linear Algebra Appl., 350 (2002), pp.~285--288.

\bibitem{Chatelin2012}
{\sc F.~Chatelin}, {\em Eigenvalues of {M}atrices}, SIAM, Philadelphia,
  {R}evised~ed., 2012.

\bibitem{DuSzyld2008}
{\sc X.~Du and D.~B. Szyld}, {\em Inexact {GMRES} for singular linear systems},
  BIT, 48 (2008), pp.~511--531.

\bibitem{EldenSimoncini2012}
{\sc L.~Eld\'{e}n and V.~Simoncini}, {\em Solving ill-posed linear systems with
  {GMRES} and a singular preconditioner}, SIAM J. Matrix Anal. Appl., 33
  (2012), pp.~1369--1394.

\bibitem{GasparoPapiniPasquali2008}
{\sc M.~G. Gasparo, A.~Papini, and A.~Pasquali}, {\em Some properties of
  {GMRES} in {H}ilbert spaces}, Numer. Funct. Anal. Optim., 29 (2008),
  pp.~1276--1285.

\bibitem{IpsenMeyer1998}
{\sc I.~C.~F. Ipsen and C.~D. Meyer}, {\em The idea behind {K}rylov methods},
  Amer. Math. Monthly, 105 (1998), pp.~889--899.

\bibitem{LiaoHayamiYin2016}
{\sc Z.~Liao, K.~Hayami, and J.-F. Yin}, {\em A stabilized {GMRES} method for
  solving inconsistent underdetermined least squares problems}, in 22nd Meeting
  of the Japan SIAM Special Interest on Algorithm for Matrix/Eigenvalue
  Problems and their Applications, 2016.

\bibitem{MorikuniHayami2015}
{\sc K.~Morikuni and K.~Hayami}, {\em Convergence of inner-iteration {GMRES}
  methods for rank-deficient least squares problems}, SIAM J. Matrix Anal.
  Appl., 36 (2015), pp.~225--250.

\bibitem{PaigeSaunders1975}
{\sc C.~C. Paige and M.~A. Saunders}, {\em Solution of sparse indefinite
  systems of linear equations}, SIAM J. Numer. Anal., 12 (1975), pp.~617--629.

\bibitem{ReichelYe2005}
{\sc L.~Reichel and Q.~Ye}, {\em Breakdown-free {GMRES} for singular systems},
  SIAM J. Matrix Anal. Appl., 26 (2005), pp.~1001--1021.

\bibitem{Robert1968}
{\sc P.~Robert}, {\em On the group-inverse of a linear transformation}, J.
  Math. Anal. Appl., 22 (1968), pp.~658--669.

\bibitem{SaadSchultz1986}
{\sc Y.~Saad and M.~H. Schultz}, {\em {GMRES}: A generalized minimal residual
  algorithm for solving nonsymmetric linear systems}, SIAM J. Sci. Stat.
  Comput., 7 (1986), pp.~856--869.

\bibitem{Schneider2005th}
{\sc O.~Schneider}, {\em Krylov {S}ubspace {M}ethods and their
  {G}eneralizations for {S}olving {S}ingular {L}inear {O}perator {E}quations
  with {A}pplications to {C}ontinuous {T}ime {M}arkov {C}hains}, PhD thesis,
  Technischen Universit\"{a}t Bergakademie Freiberg, 2005.

\bibitem{Sidi1999}
{\sc A.~Sidi}, {\em A unified approach to {K}rylov subspace methods for the
  {D}razin-inverse solution of singular nonsymmetric linear systems}, Linear
  Algebra Appl., 298 (1999), pp.~99--113.

\bibitem{Sidi2001}
\leavevmode\vrule height 2pt depth -1.6pt width 23pt, {\em {DGMRES}: {A}
  {GMRES}-type algorithm for {D}razin-inverse solution of singular nonsymmetric
  linear systems}, Linear Algebra Appl., 335 (2001), pp.~189--204.

\bibitem{Smoch2007}
{\sc L.~Smoch}, {\em Spectral behaviour of {GMRES} applied to singular
  systems}, Adv. Comput. Math., 27 (2007), pp.~151--166.

\bibitem{Strakos1991}
{\sc Z.~Strako{\v{s}}}, {\em On the real convergence rate of the conjugate
  gradient method}, Linear Algebra Appl., 154-156 (1991), pp.~535--549.

\bibitem{Walker1988}
{\sc H.~F. Walker}, {\em Implementation of the {GMRES} method using
  {H}ouseholder transformations}, SIAM J. Sci. Stat. Comput., 9 (1988),
  pp.~152--163.

\bibitem{WalkerZhou1994}
{\sc H.~F. Walker and L.~Zhou}, {\em A simpler {GMRES}}, Numer. Linear Algebra
  Appl., 1 (1994), pp.~571--581.

\bibitem{Wei2000}
{\sc Y.-M. Wei}, {\em Perturbation analysis of singular linear systems with
  index one}, Int. J. Comput. Math., 74 (2000), pp.~483--491.

\bibitem{WeiWu2000}
{\sc Y.-M. Wei and H.~Wu}, {\em Convergence properties of {K}rylov subspace
  methods for singular linear systems with arbitrary index}, J. Comput. Appl.
  Math., 114 (2000), pp.~305--318.

\bibitem{Zhang2010}
{\sc N.-M. Zhang}, {\em A note on preconditioned {GMRES} for solving singular
  linear systems}, BIT, 50 (2010), pp.~207--220.

\bibitem{ZhangWei2004}
{\sc N.-M. Zhang and Y.-M. Wei}, {\em Solving {EP} singular linear systems},
  Int. J. Comput. Math., 81 (2004), pp.~1395--1405.

\end{thebibliography}

\end{document}